\PassOptionsToPackage{reqno}{amsmath}
\documentclass[hidelinks,onefignum,onetabnum]{siamart251216}

\usepackage[T1]{fontenc}
\usepackage[utf8]{inputenc}
\usepackage{microtype}
\usepackage{amssymb,mathtools,mathrsfs}
\usepackage{bm}
\usepackage{booktabs}
\usepackage{array}
\usepackage{enumitem}
\usepackage{placeins}
\usepackage{subcaption}
\usepackage[numbers,sort&compress]{natbib}

\hypersetup{
  pdftitle={A second-order energy-stable low-regularity integrator for the Navier--Stokes equations},
  pdfauthor={Shu Ma},
  pdfkeywords={Navier--Stokes equations, exponential low-regularity integrator, energy stability, viscosity-uniform error estimate}
}

\graphicspath{{figures/}}
\allowdisplaybreaks
\newsiamthm{assumption}{Assumption}
\newsiamremark{remark}{Remark}
\crefname{assumption}{assumption}{assumptions}
\Crefname{assumption}{Assumption}{Assumptions}
\newcommand{\T}{\mathbb T}
\newcommand{\R}{\mathbb R}
\newcommand{\N}{\mathbb N}
\newcommand{\PP}{\mathbb P}
\newcommand{\ip}[2]{\left(#1,#2\right)}
\newcommand{\norm}[1]{\left\|#1\right\|}

\newcommand{\BM}{B_M}
\newcommand{\dd}{\,\mathrm d}
\DeclareMathOperator{\diver}{div}
\colorlet{BLUE}{blue}
\title{A second-order energy-stable low-regularity integrator for the Navier--Stokes equations
}
\author{
Shu Ma\thanks{Department of Mathematics, Hong Kong Baptist University,
Kowloon Tong, Hong Kong (\email{shu\_ma@hkbu.edu.hk}).}}
\headers{Energy-stable low-regularity integrator}{S. Ma}
\date{}

\usepackage{cases}

\begin{document}
\maketitle

\begin{abstract}
A second-order exponential low-regularity integrator is proposed for the incompressible Navier--Stokes equations. The method requires only linear solves and satisfies a discrete energy-decay property. Second-order temporal convergence is established under a uniform $H^4$ solution bound, with an error constant that remains bounded as the viscosity tends to zero. For the Fourier--Galerkin discretization, the optimal spatial convergence rate is established for fixed viscosity. The method can also be extended to conforming finite element methods while retaining the discrete energy-decay structure. Numerical results confirm the convergence orders and robustness with respect to the viscosity.
\end{abstract}

\begin{keywords}
Navier--Stokes equations, exponential low-regularity integrator, energy dissipation,
error estimate, Fourier--Galerkin method
\end{keywords}

\begin{MSCcodes}
65M12, 65M15, 76D05
\end{MSCcodes}

\section{Introduction}

We consider the periodic incompressible Navier--Stokes (NS) equations
\begin{equation}
\label{eq:NS}
\left \{
\begin{aligned} 
\partial_t u + u\cdot\nabla u - \mu \varDelta u
+\nabla p &= 0 && \text{in } \T^d\times (0,T],\\
\nabla\cdot u&=0&&\text{in } \T^d\times (0,T],\\
u(0)&=u_0&&\text{in } \T^d,
\end{aligned}
\right .
\end{equation}
where $d\in\{2,3\}$, $u$ and $p$ denote the velocity and pressure,
and $\mu>0$ is the viscosity. We write $u\cdot\nabla u=(u\cdot\nabla)u$. The well-posedness theory
in two and three dimensions is discussed in
\cite{FK64,KT01,LL11,Lions,Hou-NS-2021}. 
The NS equations model incompressible viscous flows. Since exact solutions are generally unavailable, the development and rigorous analysis of accurate and stable numerical methods are of fundamental importance.

A basic difficulty in the error analysis of time discretizations for
\eqref{eq:NS} is its dependence on the viscosity. Classical estimates often
use viscous dissipation to control the nonlinear terms. Young's inequality
then introduces negative powers of $\mu$, and the estimates deteriorate as
$\mu$ tends to zero. Viscosity-uniform estimates have been obtained for
second-order BDF-type schemes under suitable stabilization or step-size
conditions \cite{GAN,JGWZ}. For fixed viscosity, second-order BDF and linearly
extrapolated Crank--Nicolson methods have also been analyzed under reduced
initial regularity by using time-weighted estimates or refined time meshes
\cite{Emmrich,LMW,CWWZ}.

A separate issue is whether the time discretization retains the energy
structure of the equation. Second-order methods based on auxiliary variables,
original variables, splitting, and energy-conserving Runge--Kutta methods have
been studied in
\cite{LSL,JL,YTK,DHJL,WHWG,HS23,HS26,AWZ,Sanderse}. Depending on the
construction, the controlled quantity is either the original kinetic energy
or a modified discrete energy. Stability with respect to a modified energy
does not by itself imply decay of the original kinetic energy.

Second-order convergence under low regularity imposes an additional
constraint. A direct third-order Taylor expansion of \eqref{eq:NS} contains
$\mu^3\Delta^3u$, while an absorption argument for the convection defect may
introduce $\mu^{-1}$. Low-regularity integrators instead retain the linear
diffusion exactly and incorporate the nonlinear structure into the
consistency analysis \cite{RS}. For nonlinear Schr\"odinger
equations, related developments include optimal fully discrete EWI estimates
without a time--space coupling \cite{BaoWangEWI}, a symmetric second-order EWI
\cite{BaoWangSEWI}, and explicit symmetric low-regularity multistep schemes
\cite{FengMaierhoferWang}. For the Navier--Stokes equations, Li, Ma, and
Schratz \cite{LMS} constructed a first-order semi-implicit exponential method
that dissipates the original kinetic energy and has a
viscosity-independent error bound. Related exponential methods are studied
in \cite{JW,KBG,WWZ}. As far as we know, combining second-order accuracy, linear solvability, original energy decay, and viscosity-uniform error estimates under weak regularity remains difficult.

The aim is to construct a second-order linearly implicit method that preserves the
energy decay of the Navier--Stokes equations and remains accurate for small
viscosity. The method combines two exact heat half-steps with a
predictor--corrector approximation of convection. Each step requires two
linear frozen-advection solves and no nonlinear iteration, and is uniquely solvable
for every $\tau>0$. The convection corrector preserves energy, while
the heat half-steps give the exact dissipation identity in
Theorem~\ref{thm:energy}. Hence the original energy is nonincreasing
without a time-step restriction.

Assume that
$\norm{u}_{L^\infty(0,T;H^4)}\le R$.
For the Fourier cutoff $M$, Theorem~\ref{thm:full-error} proves
\[
 \max_{0\le n\le N}\norm{u_{M}^n-u(t_n)}_{L^2}
 \le C_{T,R,\mu}\bigl(\tau^2+M^{-3}\bigr),
 \qquad \mu>0,
\]
Here $C_{T,R,\mu}$ is independent of $M$ and $N$ and remains
bounded as $\mu\downarrow0$. The step-size threshold is independent of $\mu$,
$M$, and $N$, and no condition couples $\tau$ to $M$. For each fixed $\mu>0$, the
spatial rate improves to the optimal order $M^{-4}$, with a constant that may
depend on $\mu$. For the temporal estimate, the interaction between the heat
flow and convection is represented by commutators. The symmetric splitting
cancels the terms involving the first Lie bracket $[\mu A,F]$, and the remaining terms are controlled in
$L^2$ under the $H^4$ bound without introducing inverse powers of $\mu$. A
dual estimate recovers the optimal spatial order for fixed viscosity. Related
splitting and commutator analyses are given in \cite{BM,HKK,HLR,EO}. The same construction extends to conforming finite elements with a
skew-symmetric convection form, and the resulting stages require only linear solves and
satisfy the discrete energy law. The error analysis is developed for the
Fourier--Galerkin discretization, while the numerical experiments cover both
spatial discretizations, show the expected convergence orders and energy
decay over the tested range of viscosities, and compare the proposed method
with representative schemes.

The rest of this article is organized as follows.
Section~\ref{sec:method} introduces the method and states the main results.
The error analysis is given in Section~\ref{sec:error}.
Section~\ref{sec:finite-element} presents the finite element realization, and
Section~\ref{sec:numerics} contains the numerical experiments.

\section{The numerical method and main results}\label{sec:method}
We first formulate the time integrator and then introduce its Fourier--Galerkin realization, followed by the solvability analysis, energy law, and convergence results.

\subsection{Structure-preserving time integrator}
\label{sec:time-stepping}

Let
\[
 \dot H
 =\{v\in L^2(\T^d)^d:\diver v=0\},
 \qquad \dot H^s=H^s(\T^d)^d\cap\dot H.
\]
Here the dot indicates the divergence-free constraint rather than homogeneous
Sobolev regularity.
The initial velocity in \eqref{eq:NS} is assumed to satisfy
$u_0\in\dot H$.

Let $\PP:L^2(\T^d)^d\to\dot H$ be the $L^2$-orthogonal Leray
projection. Since $\PP$ annihilates gradients and commutes with $\Delta$
on $\T^d$, applying $\PP$ to \eqref{eq:NS} gives
\begin{equation}\label{eq:NS-projected}
 \partial_tu=\mu Au-B(u,u),\qquad u(0)=u_0,
\end{equation}
where
\[
 A=\PP\Delta=\Delta:\dot H^2\to\dot H,
 \qquad B(a,b)=\PP((a\cdot\nabla)b).
\]
The linear flow is
\[
 S(t)=e^{\mu tA}=e^{\mu t\Delta},\qquad t\ge0.
\]

All $L^2$ inner products on $\T^d$ are normalized by its volume.
For divergence-free $a,b,c\in H^3(\T^d)^d$, periodic integration by parts
gives
\begin{equation}\label{eq:skew-continuous}
 \ip{B(a,b)}c=-\ip{B(a,c)}b,
 \qquad \ip{B(a,b)}b=0.
\end{equation}
With $E(v)=\tfrac12\norm v_{L^2}^2$, a sufficiently regular solution of
\eqref{eq:NS-projected} therefore satisfies
\begin{equation}
 \frac{\mathrm d}{\mathrm dt}E(u(t))
 =-\mu\norm{\nabla u(t)}_{L^2}^2 \le 0.
\end{equation}

Let $t_n=n\tau$. We use Strang splitting, with exact heat half-steps surrounding
a convection step of length $\tau$.
The implicit midpoint rule preserves the $L^2$ norm of the convection flow but
requires a nonlinear solve. We instead predict the midpoint advecting velocity
and freeze it in a linear midpoint corrector.

\begin{samepage}
With $u^0=u_0$, the time-semidiscrete step is
\begin{subnumcases}{\label{eq:time-semidiscrete-scheme}}
 w^{n,1}=S(\tau/2)u^n,&\\
 w^{n,2}+\dfrac\tau2 B\bigl(w^{n,1},w^{n,2}\bigr)=w^{n,1},&\\
 w^{n,3}-w^{n,1}
 +\tau B\left(w^{n,2},\dfrac{w^{n,3}+w^{n,1}}{2}\right)=0,&\\
 u^{n+1}=S(\tau/2)w^{n,3}.&
\end{subnumcases}
\end{samepage}
The stages $w^{n,1}$, $w^{n,2}$, and $w^{n,3}$ are the first heat half-step,
the convection midpoint predictor, and the corrected convection endpoint.

Both convection equations are linear in their unknowns. The predictor has
midpoint error $O(\tau^2)$, while \eqref{eq:skew-continuous} makes the corrector
energy-conserving. Dissipation is introduced only by the heat half-steps.

\subsection{Fourier--Galerkin methods}\label{sec:fully-discrete}

For $s\in\R$, we use the normalized Fourier coefficients and the associated
Sobolev norm
\[
 \widehat f(k)=(2\pi)^{-d}\int_{\T^d}f(x)e^{-\mathrm i k\cdot x}\dd x,
 \qquad
 \norm{f}_{H^s}^2=\sum_{k\in\mathbb Z^d}(1+|k|^2)^s|\widehat f(k)|^2.
\]
Let $\Pi_M$ be the $L^2$-orthogonal projection onto
\[
 X_M=\{v:\ \widehat v(k)=0\text{ if }|k|>M,
 \ k\cdot\widehat v(k)=0,
 \ \widehat v(-k)=\overline{\widehat v(k)}\}.
\]
The projection $\Pi_M$ combines Fourier truncation with the Leray projection
and commutes with spatial derivatives and $S(t)$. Standard Fourier
projection estimates \,\cite[Section~2.2.2 and Theorem~2.1]{STW} imply that,
for every divergence-free periodic vector field $v\in H^s$ and
$0\le r\le s$,
\begin{equation}\label{eq:fourier-projection-estimates}
 \norm{\Pi_Mv}_{H^s}\le \norm v_{H^s},
 \qquad
 \norm{(I-\Pi_M)v}_{H^r}\le C M^{r-s}\norm v_{H^s}.
\end{equation}
For divergence-free $a,b$ with
$(a\cdot\nabla)b\in L^2$, set
\begin{equation}\label{eq:BM}
 \BM(a,b)=\Pi_M((a\cdot\nabla)b)=\Pi_MB(a,b).
\end{equation}
The following bounds will be used below.
\begin{lemma}
\label{lem:BM-bounds}
Let $d\in\{2,3\}$, and let $a$ and $b$ be sufficiently regular
divergence-free vector fields, with $b\in X_M$. Then
\begin{align}
 |\ip{\BM(a,b)}b_{H^s}|
 &\le c_s\norm a_{H^s}\norm b_{H^s}^2,
 &&s>d/2+1,\label{eq:sobolev-skew}\\
 \norm{\BM(a,b)}_{L^2}
 &\le\min\bigl\{\norm a_{L^2}\norm{\nabla b}_{L^\infty},
                  \norm a_{L^\infty}\norm{\nabla b}_{L^2}\bigr\},
 \label{eq:basic-prod}\\
 \norm{\BM(a,b)}_{H^2}
 &\le C\norm a_{H^3}\norm b_{H^3}.
 \label{eq:H2prod}
\end{align}
The constants are independent of $M$.
\end{lemma}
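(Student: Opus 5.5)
The three bounds are proved separately. Each one rests on two facts: $\Pi_M$ is an $L^2$-orthogonal projection that commutes with derivatives, and Fourier multipliers are bounded on $H^s$.

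For \eqref{eq:sobolev-skew}, write $\Lambda^s=(1-\Delta)^{s/2}$, so that $\ip{f}{g}_{H^s}=\ip{\Lambda^s f}{\Lambda^s g}$. Since $b\in X_M$, $\Lambda^s b\in X_M$, and $\Pi_M$ is self-adjoint and commutes with $\Lambda^s$, we have
\[
\ip{\BM(a,b)}{b}_{H^s}=\ip{\Lambda^s((a\cdot\nabla)b)}{\Lambda^s b}.
\]
We then split
\[
\Lambda^s((a\cdot\nabla)b)=(a\cdot\nabla)\Lambda^s b+[\Lambda^s,a\cdot\nabla]b.
\]
The first term pairs with $\Lambda^s b$ to give zero. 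This follows from periodic integration by parts and $\diver a=0$, i.e.\ the second identity of \eqref{eq:skew-continuous} applied without the Leray projection, which is legitimate because $\Lambda^s b$ is a smooth trigonometric polynomial. The commutator term is handled by the Kato--Ponce commutator estimate
\[
\norm{[\Lambda^s,a\cdot\nabla]b}_{L^2}\le C\bigl(\norm{\nabla a}_{L^\infty}\norm{b}_{H^s}+\norm{a}_{H^s}\norm{\nabla b}_{L^\infty}\bigr).
\]
Because $s>d/2+1$, the embedding $H^{s-1}\hookrightarrow L^\infty$ bounds both $L^\infty$ norms by $H^s$ norms. Cauchy--Schwarz then gives $c_s\norm a_{H^s}\norm b_{H^s}^2$. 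This is the main step; the only subtlety is keeping the argument free of $M$, which holds because $\Pi_M$ disappears after the first identity.

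For \eqref{eq:basic-prod}, $\Pi_M$ is an orthogonal projection, so $\norm{\BM(a,b)}_{L^2}\le\norm{(a\cdot\nabla)b}_{L^2}$. Hölder's inequality in the two arrangements, $L^2\times L^\infty$ and $L^\infty\times L^2$, then gives both bounds, and the constant is one under the normalized inner product.

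For \eqref{eq:H2prod}, the first bound in \eqref{eq:fourier-projection-estimates} with $s=2$ gives $\norm{\BM(a,b)}_{H^2}\le\norm{(a\cdot\nabla)b}_{H^2}$. Since $H^2$ is not an algebra for our purpose with $\nabla b\in H^2$ only, we instead expand $\partial^\alpha(a_j\partial_j b)$ for $|\alpha|\le2$ by the Leibniz rule. Each term is bounded as follows:
\begin{itemize}
\item $\norm{\partial^\alpha a}_{L^2}\norm{\nabla b}_{L^\infty}$, using $\nabla b\in H^2\hookrightarrow L^\infty$ for $d\le3$;
\item $\norm{\nabla a}_{L^4}\norm{\nabla^2 b}_{L^4}$, using $H^1\hookrightarrow L^4$;
\item $\norm{a}_{L^\infty}\norm{\nabla^3 b}_{L^2}$.
\end{itemize}
All of these are at most $C\norm a_{H^3}\norm b_{H^3}$ by Sobolev embeddings valid in $d\in\{2,3\}$. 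The constants come only from the embeddings and the Kato--Ponce inequality, so they are independent of $M$.
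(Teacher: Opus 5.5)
Your proposal is correct and follows the paper's proof essentially step by step: the same $\Lambda^s$-commutator identity (using that $\Pi_M$ is self-adjoint, commutes with $\Lambda^s$, and fixes $\Lambda^s b$) with the Kato--Ponce estimate on the torus and $H^{s-1}\hookrightarrow L^\infty$ for \eqref{eq:sobolev-skew}, and contractivity of $\Pi_M$ plus H\"older for \eqref{eq:basic-prod}; for \eqref{eq:H2prod}, your explicit Leibniz expansion is just a hands-on version of the paper's appeal to the periodic Leibniz estimate and the algebra property of $H^2$. One small correction: your aside that $H^2$ is ``not an algebra for our purpose'' is mistaken, because $a\in H^3\subset H^2$ and $\nabla b\in H^2$, so the algebra property for $d\le3$ bounds $\norm{a_j\partial_j b}_{H^2}$ directly; this does not affect the rest of your argument.
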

\begin{proof}
Let $J^s=(I-\Delta)^{s/2}$. Since $\Pi_M$ is self-adjoint and commutes with
$J^s$, integration by parts gives
\[
 \ip{\BM(a,b)}b_{H^s}
 =\ip{[J^s,a\cdot\nabla]b}{J^s b}_{L^2}.
\]
The Kato--Ponce estimate \cite{KP}, the
periodic dyadic commutator estimate
\cite[Lemma~2.3]{BOZ}, and the embedding
$H^s\hookrightarrow W^{1,\infty}$ yield
\eqref{eq:sobolev-skew}. The $L^2$ bounds follow from the contractivity of
$\Pi_M$ and H\"older's inequality. Finally, the periodic Leibniz estimate
\cite[Proposition~1]{BOZ} and the algebra property of $H^2(\T^d)$ for
$d\le3$ give \eqref{eq:H2prod}.
\end{proof}

Here and below, $M$ denotes the Fourier cutoff. The Galerkin solution satisfies
\begin{equation}\label{eq:NS-Galerkin}
 \partial_tu_M-\mu\Delta u_M+\BM(u_M,u_M)=0,
 \qquad u_M(0)=\Pi_Mu_0.
\end{equation}
Orthogonality of $\Pi_M$ and periodic integration by parts give
\begin{equation}\label{eq:skew}
 \ip{\BM(a,b)}c=-\ip{\BM(a,c)}b,
 \qquad a,b,c\in X_M.
\end{equation}
In particular, $\ip{\BM(a,b)}b=0$.

The Fourier--Galerkin realization of \eqref{eq:time-semidiscrete-scheme} is
initialized by $u_M^0=\Pi_Mu_0$. Given $u_M^n\in X_M$, compute
\begin{subnumcases}{\label{eq:scheme}}
 w_M^{n,1}=S(\tau/2)u_M^n,\label{eq:heat1}&\\
 w_M^{n,2}+\dfrac\tau2\BM\bigl(w_M^{n,1},w_M^{n,2}\bigr)
 =w_M^{n,1},\label{eq:predictor}&\\
 w_M^{n,3}-w_M^{n,1}
 +\tau\BM\left(w_M^{n,2},
       \dfrac{w_M^{n,3}+w_M^{n,1}}{2}\right)=0,
 \label{eq:corrector}&\\
 u_M^{n+1}=S(\tau/2)w_M^{n,3}.\label{eq:heat2}&
\end{subnumcases}
Writing $\BM(a,\cdot)b=\BM(a,b)$, define the predictor and convection maps by
\begin{equation}\label{eq:cayley}
\begin{aligned}
 \mathcal P_\tau(v)
 &:=\left[I+\tfrac\tau2\BM(v,\cdot)\right]^{-1}v,\\
 \mathcal C_\tau(v)
 &:=\left[I+\tfrac\tau2\BM(\mathcal P_\tau(v),\cdot)\right]^{-1}
   \left[I-\tfrac\tau2\BM(\mathcal P_\tau(v),\cdot)\right]v.
\end{aligned}
\end{equation}
The full numerical one-step map is
\begin{equation}\label{eq:numerical-one-step-map}
 \mathcal M_\tau(v)
 =S(\tau/2)\mathcal C_\tau\bigl(S(\tau/2)v\bigr),
 \qquad
 u_M^{n+1}=\mathcal M_\tau(u_M^n).
\end{equation}
For every $t\ge0$ and $s\in\R$, the heat semigroup satisfies
\begin{equation}\label{eq:heat-contraction}
 \widehat{S(t)v}(k)=e^{-\mu t|k|^2}\widehat v(k),
 \qquad
 \norm{S(t)v}_{H^s}\le\norm v_{H^s}.
\end{equation}

\subsection{Discrete energy-decay property}\label{sec:energy-decay}

\begin{theorem}[Discrete energy-decay property]
\label{thm:energy}
For every $M\ge1$, $\tau>0$, and $\mu>0$, each step of
\eqref{eq:scheme} is uniquely solvable in $X_M$.
If the scheme is initialized by $u_M^0=\Pi_Mu_0$, its solution also satisfies
\begin{align}\label{eq:energy-identity}
 E(u_M^{n+1})-E(u_M^n)
 ={}&-\mu\int_0^{\tau/2}\norm{\nabla S(t)u_M^n}_{L^2}^2\dd t\nonumber\\
 &-\mu\int_0^{\tau/2}\norm{\nabla S(t)w_M^{n,3}}_{L^2}^2\dd t.
\end{align}
Consequently,
\begin{align}
 E(u_M^{n+1})\le E(u_M^n),\qquad n\ge0.
\end{align}
\end{theorem}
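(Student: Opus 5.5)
The argument has two parts: unique solvability of the two linear convection stages, and an energy identity obtained by chaining the heat half-steps with the energy-conserving corrector.

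\emph{Unique solvability.} Both convection stages \eqref{eq:predictor} and \eqref{eq:corrector} are linear equations on the finite-dimensional real space $X_M$. Fix $a\in X_M$ and consider the operator $L_a v = v+\tfrac{\tau}{2}\BM(a,v)$ on $X_M$. By the skew-symmetry \eqref{eq:skew}, $\ip{\BM(a,v)}v=0$, so
\[
\ip{L_a v}{v}=\norm v_{L^2}^2 .
\]
Hence $L_a$ is injective, and therefore bijective, since $X_M$ is finite-dimensional. Since $w_M^{n,1}=S(\tau/2)u_M^n$ lies in $X_M$ ($S(t)$ commutes with $\Pi_M$), the predictor is uniquely solvable with $a=w_M^{n,1}$.

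For the corrector, set $z=(w_M^{n,3}+w_M^{n,1})/2$. Then $w_M^{n,3}=2z-w_M^{n,1}$, and \eqref{eq:corrector} becomes $2z-2w_M^{n,1}+\tau\BM(w_M^{n,2},z)=0$, that is, $L_{w_M^{n,2}}z=w_M^{n,1}$. This is again uniquely solvable, with $w_M^{n,2}\in X_M$. The final heat step is explicit. No restriction on $\tau$, $\mu$, or $M$ enters.

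\emph{Energy identity.} Split the step into three pieces.
\begin{itemize}
\item[(i)] For $v\in X_M$, the function $t\mapsto S(t)v$ solves $\partial_t y=\mu\Delta y$. Integrating by parts on the torus gives $\frac{\mathrm d}{\mathrm dt}E(S(t)v)=-\mu\norm{\nabla S(t)v}_{L^2}^2$. Integrating over $[0,\tau/2]$ gives
\[
E(w_M^{n,1})-E(u_M^n)=-\mu\int_0^{\tau/2}\norm{\nabla S(t)u_M^n}_{L^2}^2\dd t .
\]
The same computation gives the analogous identity for $E(u_M^{n+1})-E(w_M^{n,3})$. Everything is a finite Fourier sum, so the differentiation is justified.
\item[(ii)] Test \eqref{eq:corrector} with $w_M^{n,3}+w_M^{n,1}\in X_M$. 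The first term gives $\norm{w_M^{n,3}}_{L^2}^2-\norm{w_M^{n,1}}_{L^2}^2$. The convection term is $2\tau\ip{\BM(w_M^{n,2},z)}z$, which vanishes by \eqref{eq:skew}. Hence $E(w_M^{n,3})=E(w_M^{n,1})$.
\end{itemize}
Summing (i) and (ii) yields \eqref{eq:energy-identity}. Both integrals on the right are nonpositive, which gives $E(u_M^{n+1})\le E(u_M^n)$.

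The only point needing care is that every stage stays in $X_M$, which is required to invoke \eqref{eq:skew}. This follows because $\BM$ maps into $X_M$ and $S(t)$ preserves $X_M$, including the divergence-free and reality constraints. Given that, no step is a genuine obstacle.
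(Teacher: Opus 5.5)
Your proof is correct and follows the paper's argument. Skew-symmetry \eqref{eq:skew} gives injectivity, hence invertibility, of $I+\theta\BM(a,\cdot)$ on the finite-dimensional space $X_M$; testing the corrector with $w_M^{n,3}+w_M^{n,1}$ gives norm conservation; and integrating the heat dissipation identity over the two half-steps gives \eqref{eq:energy-identity}. The differences are only cosmetic: you use the coercivity $\ip{L_av}{v}=\norm v_{L^2}^2$ rather than the paper's identity $\norm{b+\theta\BM(a,b)}_{L^2}^2=\norm b_{L^2}^2+\theta^2\norm{\BM(a,b)}_{L^2}^2$ (which also yields the resolvent contraction \eqref{eq:resolvent-contraction} used later in the error analysis), and you solve the corrector for the midpoint $z$ instead of directly for $w_M^{n,3}$.
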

\begin{proof}
By \eqref{eq:skew}, for $a,b\in X_M$ and $\theta\in\R$,
\begin{equation}
 \norm{b+\theta\BM(a,b)}_{L^2}^2
 =\norm b_{L^2}^2+\theta^2\norm{\BM(a,b)}_{L^2}^2
 \ge\norm b_{L^2}^2.
\end{equation}
Hence $I+\theta\BM(a,\cdot)$ is injective and therefore invertible on the
finite-dimensional space $X_M$, with
\begin{equation}\label{eq:resolvent-contraction}
 \norm{[I+\theta\BM(a,\cdot)]^{-1}f}_{L^2}
 \le\norm f_{L^2}.
\end{equation}
This determines the predictor and corrector uniquely. The heat stages are
well defined because $S(t)$ preserves $X_M$.

Testing \eqref{eq:corrector} with $(w_M^{n,3}+w_M^{n,1})/2$ and using
\eqref{eq:skew} gives
\begin{equation}
 \norm{w_M^{n,3}}_{L^2}=\norm{w_M^{n,1}}_{L^2}.
\end{equation}
For $y(t)=S(t)z$,
\begin{equation}
 \frac{\mathrm d}{\mathrm dt}E(y(t))
 =\mu\ip{\Delta y(t)}{y(t)}
 =-\mu\norm{\nabla y(t)}_{L^2}^2.
\end{equation}
Integrating this identity over the two heat half-steps proves
\eqref{eq:energy-identity}; the energy inequality follows.
\end{proof}

\subsection{Main convergence results}\label{sec:main-errors}

Fix $T>0$ and assume that \eqref{eq:NS} has a strong solution $u$ satisfying
\begin{equation}\label{eq:solution-regularity}
 u\in C([0,T];L^2)\cap L^\infty(0,T;H^4),\qquad
 \sup_{0\le t\le T}\norm{u(t)}_{H^4}\le R,\qquad R\ge1.
\end{equation}
The bound is understood for a strong representative on $[0,T]$. In three
dimensions, it is a conditional assumption on the prescribed interval.
When the estimate is applied to a viscosity-indexed family of
solutions, the bound $R$ is assumed to remain uniform in the
vanishing-viscosity limit.

\begin{theorem}[Error estimate]\label{thm:full-error}
Let $d=2,3$ and assume \eqref{eq:solution-regularity} on $[0,T]$.
For the scheme \eqref{eq:scheme}, initialized by $u_M^0=\Pi_Mu_0$,
for every $\mu>0$ there are constants $C_{T,R,\mu}$ and
$\tau_0(R)>0$ such that
\begin{equation}\label{eq:finite-time-full-error}
 \max_{0\le n\le N}\norm{u_M^n-u(t_n)}_{L^2}
 \le C_{T,R,\mu}\bigl(\tau^2+M^{-3}\bigr),\qquad
 \tau=T/N\le\tau_0(R),\quad N\in\N.
\end{equation}
The constant $C_{T,R,\mu}$ is independent of $M$ and $N$ and
can be chosen so that
\[
 \limsup_{\mu\downarrow0}C_{T,R,\mu}<\infty.
\]
No coupling between $\tau$ and $M$ is required.
For each fixed $\mu>0$, the spatial estimate can be sharpened to
\begin{equation}\label{eq:fixed-viscosity-full-error}
 \max_{0\le n\le N}\norm{u_M^n-u(t_n)}_{L^2}
 \le \widetilde C_{T,R,\mu}
       \bigl(\tau^2+M^{-4}\bigr),
 \qquad \tau=T/N\le\tau_0(R).
\end{equation}
Here $\widetilde C_{T,R,\mu}$ is independent of $M$ and $N$.
\end{theorem}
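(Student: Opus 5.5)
We compare the scheme with the projected exact solution $\Pi_M u(t_n)$ and use the standard decomposition into local consistency and stability. We write $e^n=u_M^n-\Pi_Mu(t_n)$. The projection part $(I-\Pi_M)u(t_n)$ is $O(M^{-4})$ in $L^2$ by \eqref{eq:fourier-projection-estimates}, so the task reduces to bounding $e^n$.

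The first step is a local error estimate for $\mathcal M_\tau$ applied to $\Pi_Mu(t_n)$. We split the exact flow into its temporal and spatial defects: $\Pi_Mu(t_{n+1})=\mathcal M_\tau(\Pi_Mu(t_n))+d_n^{\rm time}+d_n^{\rm space}$.

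\emph{Temporal defect.} We write the exact solution via Duhamel on the Galerkin-projected equation, $u(t+\tau)=S(\tau)u(t)-\int_0^\tau S(\tau-s)B(u(t+s),u(t+s))\dd s$, and iterate once more. The numerical map is expanded the same way. The predictor satisfies $\mathcal P_\tau(v)=v-\frac\tau2B(v,v)+O(\tau^2)$, and the Cayley corrector gives $\mathcal C_\tau(v)=v-\tau B(v,v)+\frac{\tau^2}{2}\bigl(B(B(v,v),v)+B(v,B(v,v))\bigr)+O(\tau^3)$. This matches the exact convection flow to second order. Conjugating by $S(\tau/2)$ and comparing with the Duhamel expansion, the symmetric placement of the heat half-steps makes the midpoint quadrature exact to first order. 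Consequently the $O(\tau^2)$ terms involving $[\mu A,F]$, with $F(v)=B(v,v)$, cancel. What remains is an $O(\tau^3)$ term made of double commutators and quadrature remainders of the form $\int_0^\tau\!\int_0^s\partial_\sigma^2\bigl(S(\tau-\sigma)B(S(\sigma)v,S(\sigma)v)\bigr)$. Each $\mu A$ produced by differentiating $S$ is kept paired with the factor $\mu$. Thus $\mu^2A^2$ acting inside $B$ costs four derivatives, distributed as $\|\cdot\|_{H^4}\|\cdot\|_{W^{1,\infty}}$. Terms with $\mu^3$ never appear because $S$ is used exactly. Terms carrying positive powers of $\mu$ are bounded by $C(R)(1+\mu)^2$, and this is what yields $\limsup_{\mu\downarrow0}C<\infty$. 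Nonlinear remainders use \eqref{eq:basic-prod}, \eqref{eq:H2prod}, and the resolvent bounds, which also hold in $H^s$ by \eqref{eq:sobolev-skew}. The target is $\|d_n^{\rm time}\|_{L^2}\le C(R,\mu)\tau^3$.

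\emph{Spatial defect.} Here $d_n^{\rm space}$ comes from $\Pi_M B(u,u)-B_M(\Pi_Mu,\Pi_Mu)$. Accumulated over $\tau\sum_n$, it is bounded by $\|u-\Pi_Mu\|_{H^1}\lesssim M^{-3}$ with constants independent of $\mu$, giving the $M^{-3}$ rate.

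The second step is stability. We prove $\|\mathcal M_\tau(v)-\mathcal M_\tau(w)\|_{L^2}\le(1+C\tau)\|v-w\|_{L^2}$ whenever $w=\Pi_Mu(t_n)$ is smooth and $v$ is close to $w$. Subtracting the corrector equations and testing with the averaged difference, the skew identity \eqref{eq:skew} kills the transport of the error by the numerical velocity. What is left are terms $\tau B_M(\delta,\nabla w)$, bounded by $\|\delta\|_{L^2}\|\nabla w\|_{L^\infty}$ with no $\mu$. The predictor difference is controlled the same way. The heat stages are contractions by \eqref{eq:heat-contraction}. To handle the nonlinearity we need an a priori bound on $\|\nabla w_M^{n,2}\|_{L^\infty}$, or a way to avoid it. We put the frozen velocity on the error side: $B_M(v_2,\cdot)$ is skew, and the remaining term $B_M(v_2-w_2,w)$ needs only $\nabla w\in L^\infty$. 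We then expect no bootstrap on the numerical solution beyond the energy bound, and only $\tau\le\tau_0(R)$ to make the resolvent expansions of the exact-solution stages valid. Discrete Gronwall then gives \eqref{eq:finite-time-full-error}.

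For \eqref{eq:fixed-viscosity-full-error} we use a duality argument. The spatial defect $(I-\Pi_M)B(u,u)$ is orthogonal to $X_M$, so the only harmful part is $B_M(u-\Pi_Mu,\cdot)$, which is a derivative of an $O(M^{-4})$ quantity. We write it as $\nabla\cdot$ of an $O(M^{-4})$ tensor in $L^2$. The error equation is then tested against a backward discrete linearized problem, or the derivative is absorbed using the dissipation $\mu\|\nabla e\|^2$ available from the heat half-steps by \eqref{eq:energy-identity}. This yields $M^{-4}$ with a $\mu^{-1}$-dependent constant.

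The main obstacle is the temporal consistency step. One must organize the commutator expansion so that every appearance of $A$ comes with its factor $\mu$ and lands on at most four derivatives in total, and verify the cancellation of the first Lie bracket terms without any integration by parts that would require dissipation.
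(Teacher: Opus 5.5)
Your overall architecture matches the paper's. You compare with $q_M^n=\Pi_Mu(t_n)$, bound a local defect, prove an $L^2$ stability estimate for $\mathcal M_\tau$, and close with discrete Gronwall, plus an $H^{-1}$/dissipation argument for the sharpened rate. Your stability argument is essentially the paper's Lemma. In the stage differences the numerical frozen velocity acts on the error and is removed by skew-symmetry. The velocity difference only hits the reference stages, whose $W^{1,\infty}$ bounds follow from $H^4$ bounds on the predictor and corrector for $\tau\le c_0/R$, so no closeness of $v$ to $w$ is needed.

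\emph{Where you differ: the temporal consistency.} The paper never expands the numerical map. It splits the defect into three pieces:
(i) predictor--corrector versus the exact Galerkin convection flow, handled by error identities and Taylor bounds on $y'''$, with no $\mu$ at all;
(ii) the exact Strang composition $\mathcal S_\tau$ versus $\varphi^{\mathrm{NS}}$, where the defect $\mathcal R(t)$ is written through $\Gamma=[\mu A,F]$, the boundary terms $\pm\tfrac t2S(t/2)\Gamma(y_t(t))$ cancel after integration by parts, only $[\mu A,\Gamma]$ and $[F,\Gamma]$ survive, and an $L^2$ Gronwall argument gives $C_R(\mu+\mu^2)\tau^3$;
(iii) $\varphi^{\mathrm{NS}}(\tau,q_M^n)-q_M^{n+1}$, handled by an energy estimate.

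You instead match a second-order expansion of $S(\tau/2)\mathcal C_\tau(S(\tau/2)\,\cdot)$ against an iterated Duhamel formula. The cancellation is then the vanishing first moment of the midpoint rule. The remainders are $\partial_\sigma^2[S(\tau-\sigma)F(S(\sigma)v)]=S(\tau-\sigma)[\mu A,\Gamma](S(\sigma)v)$ plus single-commutator derivatives of the double-integral integrand.

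\emph{What each route buys.} Yours is more elementary: it needs no tangent bound for $\varphi^{\mathrm{conv}}$, and no $H^4$ bound for $\varphi^{\mathrm{NS}}$ if you expand $q_M$ directly. In exchange, you must control explicit $O(\tau^3)$ remainders of both resolvents in $L^2$. Also, if $r_M$ sits inside the iterated Duhamel terms, $F'$ acts on $\int_0^sS(s-\sigma)r_M\,\mathrm d\sigma$ and costs a derivative, giving only $\tau^2\|r_M\|_{H^1}\lesssim\tau^2M^{-2}$ per step. This is still acceptable because $\tau M^{-2}\le\tfrac12(\tau^2+M^{-4})$. The paper's route keeps $r_M$ out of the temporal expansion and treats it by an energy estimate that transfers directly to $H^{-1}$.

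\emph{Points to correct when you write it out.}
\begin{enumerate}
\item The double commutator is not bounded by an $H^4\times W^{1,\infty}$ split. Terms such as $\mu^2A^2B_M(w,w)$ or $B_M(w,\mu^2A^2w)$ individually need $H^5$. What saves you is the exact collapse $\Gamma(w)=-2\mu\sum_jB_M(\partial_jw,\partial_jw)$ and $[\mu A,\Gamma](w)=-4\mu^2\sum_{i,j}B_M(\partial_{ij}w,\partial_{ij}w)$, bounded by $\|\nabla^2w\|_{L^\infty}\|\nabla^3w\|_{L^2}$. Pairing each $A$ with $\mu$ explains the powers of $\mu$, not the regularity.
\item In $r_M=-B_M(q_M,u-q_M)-B_M(u-q_M,u)$, the term that is only $O(M^{-3})$ in $L^2$ is $B_M(q_M,u-q_M)$, where the derivative falls on the tail. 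Both terms are in divergence form, so $\|r_M\|_{H^{-1}}\le C_RM^{-4}$ holds as you intend.
\item For the sharpened rate you still need the one-step bound $\|\varphi^{\mathrm{NS}}(\tau,q_M^n)-q_M^{n+1}\|_{H^{-1}}\le\widetilde C_{R,\mu}\tau M^{-4}$. The paper gets it by testing the $d_M$ equation with $(-A)^{-1}d_M$ and absorbing with $\mu\|d_M\|_{L^2}^2$.
\item The dissipation you absorb into is not the scheme's energy identity. It is the linear bound $\|S(\tau/2)v\|_{L^2}^2+\mu\tau\|\nabla S(\tau/2)v\|_{L^2}^2\le\|v\|_{L^2}^2$ applied to differences. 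This controls $a^{n+1}=\mathcal M_\tau(u_M^n)-\mathcal M_\tau(q_M^n)$, not $e^{n+1}$. So you must expand $\|a^{n+1}+\delta_M^{n+1}\|_{L^2}^2$ and pair the $H^{-1}$ part of the defect with $a^{n+1}$. No backward dual problem is needed.
\end{enumerate}
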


\section{Error analysis}
\label{sec:error}
We first
derive the equation satisfied by the Fourier projection of the exact solution
and estimate its spatial residual. We then estimate the
consistency defect, including the temporal and spatial contributions. Finally,
a stability estimate gives an exact error recursion, which is closed
by discrete Gronwall.

\subsection{The Fourier projection and the spatial residual}
\label{sec:error-reference}

Following the standard Fourier projection analysis for the periodic
Navier--Stokes equations \cite{Hald81,E93}, set
\begin{equation}\label{eq:projected-reference}
 q_M(t)=\Pi_Mu(t),\qquad q_M^n:=q_M(t_n).
\end{equation}
Introduce the projection and discrete errors
\begin{equation}\label{eq:error-splitting}
 \eta_M(t)=u(t)-q_M(t),\qquad
 \theta_M^n=u_M^n-q_M^n,\qquad
 u_M^n-u(t_n)=\theta_M^n-\eta_M(t_n).
\end{equation}
The $H^4$-stability of $\Pi_M$ and \eqref{eq:solution-regularity} give
\begin{equation}\label{eq:q-H4}
 \sup_{0\le t\le T}\norm{q_M(t)}_{H^4}\le R.
\end{equation}
Since $q_M'=\Pi_Mu'$ and $\Pi_M$ commutes with $A$, applying $\Pi_M$ to
\eqref{eq:NS-projected} and adding and subtracting $B_M(q_M,q_M)$ gives
\begin{equation}\label{eq:projected-equation}
 q_M'=\mu Aq_M-\BM(q_M,q_M)+r_M,\qquad
 r_M:=\BM(q_M,q_M)-\Pi_MB(u,u).
\end{equation}
Thus $r_M$ is the nonlinear projection residual. By bilinearity,
\begin{align}
 r_M
 &=\Pi_M\bigl(B(q_M,q_M)-B(u,u)\bigr)\nonumber\\
 &=-\Pi_MB(q_M,u-q_M)-\Pi_MB(u-q_M,u)\nonumber\\
 &=-\BM(q_M,u-q_M)-\BM(u-q_M,u).
 \label{eq:projection-defect-form}
\end{align}

Under \eqref{eq:solution-regularity}, the Fourier projection estimates
\eqref{eq:fourier-projection-estimates}, with $(s,r)=(4,0)$ and $(4,1)$,
give
\begin{equation}\label{eq:projection-tails}
 \sup_{0\le t\le T}\norm{u(t)-q_M(t)}_{L^2}\le C_RM^{-4},
 \qquad
 \sup_{0\le t\le T}\norm{\nabla(u(t)-q_M(t))}_{L^2}\le C_RM^{-3},
\end{equation}
By \eqref{eq:projection-defect-form}, the $L^2$-contractivity of $\Pi_M$,
Sobolev embedding and \eqref{eq:projection-tails},
\begin{equation}\label{eq:spatial-error}
 \begin{aligned}
 \sup_{0\le t\le T}\norm{r_M(t)}_{L^2}
 &\le \sup_{0\le t\le T}
 \left(
 \norm{q_M}_{L^\infty}\norm{\nabla(u-q_M)}_{L^2}
 +\norm{u-q_M}_{L^2}\norm{\nabla u}_{L^\infty}
 \right)\\
 &\le C_RM^{-3}.
 \end{aligned}
\end{equation}
The same residual has the dual estimate
\begin{equation}\label{eq:spatial-error-dual}
 \sup_{0\le t\le T}\norm{r_M(t)}_{H^{-1}}\le C_RM^{-4}.
\end{equation}
Indeed, for $\phi\in\dot H^1$ and $\phi_M=\Pi_M\phi$,
\[
\begin{aligned}
 |\ip{r_M}{\phi}|
 &\le |\ip{B(q_M,\phi_M)}{u-q_M}|
      +|\ip{B(u-q_M,u)}{\phi_M}|\\
 &\le C_RM^{-4}\norm{\phi}_{H^1}.
\end{aligned}
\]
Here we used \eqref{eq:projection-defect-form}, periodic integration by parts
and \eqref{eq:projection-tails}; taking the supremum over $\phi$ proves
\eqref{eq:spatial-error-dual}. The constants in
\eqref{eq:projection-tails}--\eqref{eq:spatial-error-dual} are independent of
$M$ and $\mu$.

\subsection{Consistency}
\label{sec:error-consistency}

For $v\in X_M$, define the exact Galerkin convection and Navier--Stokes
flows by
\begin{equation}\label{eq:convection-flow}
\begin{aligned}
 \varphi^{\mathrm{conv}}(t,v)&=y(t),
 &y'&=-\BM(y,y),
 &y(0)&=v,\\
 \varphi^{\mathrm{NS}}(t,v)&=z(t),
 &z'&=\mu Az-\BM(z,z),
 &z(0)&=v.
\end{aligned}
\end{equation}
The second line is the flow of \eqref{eq:NS-Galerkin}. For $v\in X_M$, define
the exact heat--convection composition by
\begin{equation}\label{eq:exact-Strang}
 \mathcal S_\tau(v)
 =S(\tau/2)\varphi^{\mathrm{conv}}(\tau,S(\tau/2)v).
\end{equation}

\begin{lemma}
\label{lem:one-step-H4}
Let $R\ge1$, $z\in X_M$ and $\norm{z}_{H^4}\le R$. There is an absolute
constant $c_0>0$ such that, if $0<\tau\le c_0/R$, then
\begin{equation}\label{eq:reference-step-bounds}
\begin{aligned}
 &\sup_{0\le s\le\tau}\norm{\varphi^{\mathrm{NS}}(s,z)}_{H^4}
 +\sup_{0\le r\le t\le\tau}
   \norm{\varphi^{\mathrm{conv}}(r,S(t/2)z)}_{H^4}\\
 &\quad+\norm{\mathcal P_\tau(S(\tau/2)z)}_{H^4}
       +\norm{\mathcal C_\tau(S(\tau/2)z)}_{H^4}
 \le C_R.
\end{aligned}
\end{equation}
The constant $C_R$ is independent of $M$, $\mu$, and $\tau$.
\end{lemma}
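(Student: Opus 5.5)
All four bounds will be obtained by $H^4$ energy estimates built on the commutator bound \eqref{eq:sobolev-skew} with $s=4>d/2+1$. Diffusion will only be used through the contraction property \eqref{eq:heat-contraction}, or through the sign of $\mu(\Delta y,y)_{H^4}\le0$. This is what keeps the constants independent of $\mu$. Independence of $M$ comes from the fact that the constant in \eqref{eq:sobolev-skew} does not depend on $M$, and that every stage stays in $X_M$.

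\emph{Continuous flows.} For $z(s)=\varphi^{\mathrm{NS}}(s,z)$ we test with $z$ in $H^4$. Since $(\mu Az,z)_{H^4}=-\mu\|\nabla J^4z\|^2\le0$, \eqref{eq:sobolev-skew} gives
\[
 \tfrac{\mathrm d}{\mathrm ds}\|z\|_{H^4}^2\le 2c_4\|z\|_{H^4}^3 .
\]
Comparison with the Riccati ODE then yields $\|z(s)\|_{H^4}\le R/(1-c_4Rs)\le 2R$ for $s\le c_0/R$ with $c_0=1/(2c_4)$. This also shows that the finite-dimensional ODE solution exists on $[0,\tau]$. The convection flow $\varphi^{\mathrm{conv}}(r,S(t/2)z)$ is handled identically without the viscous term. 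The initial datum satisfies $\|S(t/2)z\|_{H^4}\le R$ by \eqref{eq:heat-contraction}, so the same bound $2R$ holds uniformly in $0\le r\le t\le\tau$.

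\emph{Predictor.} Set $v=S(\tau/2)z$, so $\|v\|_{H^4}\le R$, and $w=\mathcal P_\tau(v)$, which solves $w+\frac\tau2\BM(v,w)=v$. Taking the $H^4$ inner product with $w$, and using \eqref{eq:sobolev-skew} with $a=v$, $b=w\in X_M$, gives
\[
 \|w\|_{H^4}^2\le \|v\|_{H^4}\|w\|_{H^4}+\tfrac\tau2 c_4\|v\|_{H^4}\|w\|_{H^4}^2 .
\]
For $\tau\le c_0/R$ with $c_0$ small, the last term is absorbed, since $\frac\tau2c_4R\le\frac12$. This gives $\|w\|_{H^4}\le 2R$.

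\emph{Corrector.} Write $y=\mathcal C_\tau(v)$, which solves
\[
 y-v+\tau\BM\bigl(w,\tfrac{y+v}2\bigr)=0 .
\]
Test with $(y+v)/2$ in $H^4$. The left side produces $\frac12(\|y\|_{H^4}^2-\|v\|_{H^4}^2)$, because the cross terms cancel by symmetry of the inner product. The convection term is controlled by \eqref{eq:sobolev-skew} with $a=w$ and $b=(y+v)/2\in X_M$, so it is at most
\[
 \tau c_4\|w\|_{H^4}\tfrac14\bigl(\|y\|_{H^4}+\|v\|_{H^4}\bigr)^2 .
\]
Hence $X=\|y\|_{H^4}$ satisfies
\[
 X^2-R^2\le \tfrac{\tau c_4R}{1}(X+R)^2 .
\]
Dividing by $X+R$ gives $X-R\le \tau c_4R(X+R)$, so $X\le 3R$ once $\tau c_4R\le 1/2$.

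The constants are therefore $C_R=CR$ with absolute $C$, independent of $M$, $\mu$ and $\tau$. The only delicate point is the corrector. There the advecting field $w$ differs from the tested field $(y+v)/2$, so the skew-symmetry must be used in the commutator form \eqref{eq:sobolev-skew} rather than as an exact cancellation. The quadratic inequality must then be closed by the smallness $\tau\lesssim1/R$; the argument above shows this works. Unique solvability of the linear stages is already provided by Theorem~\ref{thm:energy}.
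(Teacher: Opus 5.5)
Your proposal is correct and follows the paper's proof essentially step for step. The two Galerkin flows are handled by $H^4$ energy estimates based on \eqref{eq:sobolev-skew} with a Riccati comparison, with the viscous term dropped by sign and the heat step used only as an $H^4$ contraction. The predictor is tested with its unknown and the corrector with $(y+v)/2$ in $H^4$, and both are closed using $\tau\lesssim 1/R$. Your explicit constants $2R$, $2R$, $3R$ just make concrete what the paper states as ``the result follows for sufficiently small $c_0$.''
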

\begin{proof}
By \eqref{eq:heat-contraction}, heat propagation is contractive in $H^4$.
For every exact convection or Galerkin trajectory $\zeta$ in
\eqref{eq:reference-step-bounds}, \eqref{eq:sobolev-skew} gives
\begin{equation}\label{eq:reference-flow-H4-bound}
 \frac{\mathrm d}{\mathrm ds}\norm{\zeta(s)}_{H^4}
 \le c\norm{\zeta(s)}_{H^4}^2,
 \qquad
 \norm{\zeta(s)}_{H^4}\le\frac{R}{1-csR}.
\end{equation}
Testing the predictor equation in $H^4$ gives
\begin{equation}\label{eq:predictor-H4-bound}
 \norm{\mathcal P_\tau(S(\tau/2)z)}_{H^4}
 \le
 \frac{\norm{S(\tau/2)z}_{H^4}}
 {1-c\tau\norm{S(\tau/2)z}_{H^4}/2}.
\end{equation}
Testing the corrector equation in $H^4$ gives
\begin{equation}\label{eq:corrector-H4-bound}
\begin{aligned}
 &\norm{\mathcal C_\tau(S(\tau/2)z)}_{H^4}
  -\norm{S(\tau/2)z}_{H^4}\\
 &\quad\le
 \frac{c\tau}{2}\norm{\mathcal P_\tau(S(\tau/2)z)}_{H^4}
 \left(
 \norm{\mathcal C_\tau(S(\tau/2)z)}_{H^4}
 +\norm{S(\tau/2)z}_{H^4}
 \right).
\end{aligned}
\end{equation}
Since $\norm{S(\tau/2)z}_{H^4}\le R$, the result follows for sufficiently
small $c_0$. No inverse inequality is used, and the viscous terms are
nonpositive in the $H^4$ energy estimate. Hence the constants are independent
of $M$ and $\mu>0$.
\end{proof}

Below the lemma is applied with $z=q_M^n$ and the bound in \eqref{eq:q-H4}.
\begin{lemma}[Consistency]
\label{prop:full-step-consistency}
Let $\mu>0$, $t_n+\tau\le T$, and $0<\tau\le c_0/R$ for a sufficiently small absolute
constant $c_0>0$. Then
\begin{equation}\label{eq:full-error}
 \delta_M^{n+1}:=\mathcal M_\tau(q_M^n)-q_M^{n+1},\qquad
 \norm{\delta_M^{n+1}}_{L^2}
 \le C_{R,\mu}\bigl(\tau^3+\tau M^{-3}\bigr).
\end{equation}
The constant $C_{R,\mu}$ is independent of $M$ and $\tau$ and
can be chosen so that
\[
 \limsup_{\mu\downarrow0}C_{R,\mu}<\infty.
\]
\end{lemma}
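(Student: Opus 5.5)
We split the defect into three parts:
\[
 \delta_M^{n+1}
 =\bigl[\mathcal M_\tau(q_M^n)-\mathcal S_\tau(q_M^n)\bigr]
 +\bigl[\mathcal S_\tau(q_M^n)-\varphi^{\mathrm{NS}}(\tau,q_M^n)\bigr]
 +\bigl[\varphi^{\mathrm{NS}}(\tau,q_M^n)-q_M^{n+1}\bigr].
\]
Each part is estimated separately in $L^2$, with all regularity supplied by Lemma~\ref{lem:one-step-H4} applied with $z=q_M^n$ and \eqref{eq:q-H4}.

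\textbf{Third part (spatial).} Set $e(t)=q_M(t_n+t)-\varphi^{\mathrm{NS}}(t,q_M^n)$. By \eqref{eq:projected-equation},
\[
 e'=\mu Ae-\BM(e,q_M)-\BM(z,e)+r_M,\qquad e(0)=0.
\]
Testing with $e$ kills the term $\ip{\BM(z,e)}e$ by \eqref{eq:skew}, and the viscous term is nonpositive. The remaining term satisfies $|\ip{\BM(e,q_M)}e|\le\norm{\nabla q_M}_{L^\infty}\norm e_{L^2}^2\le C_R\norm e_{L^2}^2$. Gronwall and \eqref{eq:spatial-error} then give $\norm{e(\tau)}_{L^2}\le C_R\tau M^{-3}$, uniformly in $\mu$.

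\textbf{First part (convection step versus exact flow).} Since $S(\tau/2)$ is an $L^2$ contraction, it suffices to bound $\mathcal C_\tau(v)-\varphi^{\mathrm{conv}}(\tau,v)$ for $v=S(\tau/2)q_M^n$, which has $\norm v_{H^4}\le R$.
\begin{itemize}
\item The predictor satisfies $\mathcal P_\tau(v)=v-\tfrac\tau2\BM(v,v)+O(\tau^2)$ in $H^2$, using \eqref{eq:H2prod} and the $H^4$ bounds.
\item Hence $\mathcal P_\tau(v)=\varphi^{\mathrm{conv}}(\tau/2,v)+O(\tau^2)$, with $O(\tau^2)$ measured in $H^1$ (indeed $W^{1,\infty}$).
\item The exact flow $y$ is compared with the implicit midpoint rule with frozen advecting velocity $\mathcal P_\tau(v)$. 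The midpoint quadrature defects $y(\tau)-v+\tau\BM\bigl(y(\tau/2),\tfrac{y(\tau)+v}{2}\bigr)=O(\tau^3)$ involve $y'''$, i.e.\ three nested products, which are bounded in $L^2$ by the $H^4$ control of Lemma~\ref{lem:one-step-H4} and \eqref{eq:basic-prod}.
\item Replacing $y(\tau/2)$ by $\mathcal P_\tau(v)$ costs $\tau\norm{\BM(O(\tau^2),\cdot)}_{L^2}=O(\tau^3)$ by \eqref{eq:basic-prod}.
\item The resolvent contraction \eqref{eq:resolvent-contraction} then transfers the residual to an error of $O(\tau^3)$.
\end{itemize}
None of these steps involves $\mu$.

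\textbf{Second part (Strang versus Navier--Stokes flow).} This is the main obstacle. We use the variation-of-constants (Lady Windermere) representation, writing $F(v)=-\BM(v,v)$. Expanding both $\mathcal S_\tau$ and $\varphi^{\mathrm{NS}}$ via Duhamel's formula around the heat flow, the local error becomes
\[
 \int_0^\tau\Bigl[S(\tau-s)F(\cdot)-S(\tau/2)\,F\bigl(\varphi^{\mathrm{conv}}(s,S(\tau/2)\cdot)\bigr)\Bigr]\dd s .
\]
It is a quadrature error whose leading term is $\int_0^\tau(s-\tau/2)\,[\mu A,F]\dd s=0$ by symmetry. This is the cancellation of the first Lie bracket.

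The remainder consists of $\tau^3$ times second-order commutators, namely $[\mu A,[\mu A,F]]$, $[F,[\mu A,F]]$ and $\mu$-derivatives of $S$. We need to show these are bounded in $L^2$ by $C_R(1+\mu^2)$ using only $H^4$ data. For instance, $[\mu A,F](v)=\mu(\Delta\BM(v,v)-2\BM(\Delta v,v)-\ldots)$ reduces to products of $\nabla^jv\,\nabla^{3-j}v$, and $[\mu A,[\mu A,F]]$ involves at most four derivatives distributed over two factors, with the $\mu^2 A^2$ pieces cancelling. This bound must be checked carefully, perhaps writing $S(t)-I=\mu\int_0^tAS$ to avoid $\Delta^2$ acting on a single factor. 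Trajectory bounds from Lemma~\ref{lem:one-step-H4} control all intermediate states.

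The resulting constant is polynomial in $\mu$, so its $\limsup$ as $\mu\downarrow0$ is finite. Summing the three parts yields \eqref{eq:full-error}.
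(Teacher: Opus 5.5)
Your three-part decomposition is the paper's. Your treatment of the predictor--corrector defect and of the projection residual coincides with its Steps~1 and~3: a predictor error of $O(\tau^2)$, the midpoint and endpoint-average residuals of the exact convection flow, the resolvent contraction, and then an $L^2$ energy estimate plus Gronwall for $r_M$. Your parenthetical claim that the predictor error is $O(\tau^2)$ in $W^{1,\infty}$ is not justified by the $H^4$ bound, but it is not needed. $L^2$ suffices, because the small quantity sits in the first slot of $\BM$, which is estimated by $\norm{a}_{L^2}\norm{\nabla b}_{L^\infty}$.

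The real difference is the splitting defect. The paper never expands the nonlinear trajectories. It computes the defect $\mathcal R(t)$ of the Strang path $t\mapsto\mathcal S_t(v)$ and rewrites it with commutator representations involving the tangent flow $\mathrm D\varphi^{\mathrm{conv}}$. It then integrates by parts so that the boundary terms $\pm\frac t2S(t/2)\Gamma(y_t(t))$, with $\Gamma=[\mu A,F]$, cancel; only $[\mu A,\Gamma]$ and $[F,\Gamma]$ survive. A Gronwall argument on the error equation then gives $C_R(\mu+\mu^2)\tau^3$, which vanishes at $\mu=0$, where the splitting is exact. Your Duhamel route avoids the tangent flow and the Gronwall step and can be completed. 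As sketched, though, it yields $C_R(1+\mu+\mu^2)\tau^3$, which is still enough for the lemma, and it needs bookkeeping your sketch omits.

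As written, that part is a plan rather than a proof. The integrand $S(\tau-s)F(\varphi^{\mathrm{NS}}(s,v))-S(\tau/2)F(\varphi^{\mathrm{conv}}(s,S(\tau/2)v))$ involves two different nonlinear trajectories, so it is not a quadrature error for a single function. You must expand both to second order with integral remainders, using that $F$ is quadratic:
$\varphi^{\mathrm{NS}}(s,v)=S(s)v+\int_0^sS(s-\sigma)F(S(\sigma)v)\dd\sigma+O(s^2)$ and $y(s)=y(0)+sF(y(0))+O(s^2)$, both in $H^1$. The error then splits into three pieces.

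(a) The terms $f(s)=S(\tau-s)F(S(s)v)$ give a midpoint-rule error bounded by $\frac{\tau^3}{24}\sup_s\norm{f''(s)}_{L^2}$, where $f''(s)=S(\tau-s)[\mu A,\Gamma](S(s)v)$. This is the only place where the symmetry is used.

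(b) The $\tau^2$-level terms compare $\iint_{0\le\sigma\le s\le\tau}g\dd\sigma\dd s$ with $\frac{\tau^2}{2}g(\tau/2,\tau/2)$, where $g(s,\sigma)=S(\tau-s)F'(S(s)v)S(s-\sigma)F(S(\sigma)v)$. The point $(\tau/2,\tau/2)$ is not the centroid of the triangle, so symmetry does not cancel anything. You must bound this piece directly by $C\tau^3\sup\bigl(\norm{\partial_sg}_{L^2}+\norm{\partial_\sigma g}_{L^2}\bigr)\le C_R\mu\tau^3$.

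(c) All remaining terms are $O(\tau^3)$ with $\mu$-independent constants.

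Do not Taylor-expand further: terms such as $\mu^2A^2F'(u)F(u)$ are not controlled by the $H^4$ bound.

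Finally, the double-commutator bound has to be computed, not conjectured, and your formula for $[\mu A,F]$ is wrong. The terms $\BM(\Delta v,v)$ and $\BM(v,\Delta v)$ cancel exactly, giving $\Gamma(v)=-2\mu\sum_j\BM(\partial_jv,\partial_jv)$. Likewise $[\mu A,\Gamma](v)=-4\mu^2\sum_{i,j}\BM(\partial_{ij}v,\partial_{ij}v)$, which puts two derivatives on one factor and three on the other. It is therefore $O(\mu^2)$ in $L^2$ under the $H^4$ bound, without any inverse power of $\mu$.
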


\begin{proof}
We estimate separately the three terms in the exact decomposition
\begin{align}\label{eq:full-defect-decomposition}
 \delta_M^{n+1}
 ={}&\bigl[\mathcal M_\tau(q_M^n)-\mathcal S_\tau(q_M^n)\bigr]\nonumber\\
 &+\bigl[\mathcal S_\tau(q_M^n)
          -\varphi^{\mathrm{NS}}(\tau,q_M^n)\bigr]\nonumber\\
 &+\bigl[\varphi^{\mathrm{NS}}(\tau,q_M^n)-q_M^{n+1}\bigr].
\end{align}
The three terms are the predictor--corrector defect, the splitting defect and
the spatial residual. Constants below may depend on the dimension, the torus
and $R$, but not on $M$, $\mu$ or $\tau$. We use the standard Sobolev
embeddings in dimensions $d\le3$.
\medskip

\smallskip\noindent\emph{Step 1: Predictor--corrector defect.}
The first term in \eqref{eq:full-defect-decomposition} is the error made by
replacing the exact Galerkin convection flow with the predictor--corrector
map. Let
\begin{equation}\label{eq:pc-reference-flow}
 y(s)=\varphi^{\mathrm{conv}}
 \bigl(s,S(\tau/2)q_M^n\bigr),
 \qquad 0\le s\le\tau.
\end{equation}
Since $y(0)=S(\tau/2)q_M^n$, equations
\eqref{eq:numerical-one-step-map}, \eqref{eq:exact-Strang} and
\eqref{eq:heat-contraction} give
\begin{equation}\label{eq:pc-reduction}
\begin{aligned}
 &\norm{\mathcal M_\tau(q_M^n)-\mathcal S_\tau(q_M^n)}_{L^2}\\
 &\quad\le\norm{\mathcal C_\tau(y(0))-y(\tau)}_{L^2}.
\end{aligned}
\end{equation}
It therefore remains to prove that the last term is bounded by
$C_R\tau^3$.

\smallskip\noindent\emph{Regularity of the exact convection trajectory.}
Equations \eqref{eq:q-H4}, \eqref{eq:heat-contraction} and
Lemma~\ref{lem:one-step-H4} show that the trajectory $y$ is bounded in
$H^4$ by $C_R$ on $[0,\tau]$. Differentiating
\eqref{eq:convection-flow} gives
\begin{equation}\label{eq:convection-time-derivatives}
\begin{aligned}
 y'&=-\BM(y,y),\\
 y''&=-\BM(y',y)-\BM(y,y'),\\
 y'''&=-\BM(y'',y)-2\BM(y',y')-\BM(y,y'').
\end{aligned}
\end{equation}
Applying \eqref{eq:basic-prod}--\eqref{eq:H2prod} therefore gives
\begin{equation}\label{eq:convection-time}
 \sup_s\norm{y'}_{H^2}
 +\sup_s\norm{y''}_{H^1}
 +\sup_s\norm{y'''}_{L^2}\le C_R.
\end{equation}

Subtracting the predictor equation from the integral identity for
$y(\tau/2)$ and using bilinearity gives
\begin{equation}\label{eq:predictor-error-identity}
\begin{aligned}
 &\left[I+\tfrac\tau2\BM(y(0),\cdot)\right]
 \left[\mathcal P_\tau(y(0))-y(\tau/2)\right]\\
 &\quad=\int_0^{\tau/2}
       \BM\bigl(y(s)-y(0),y(\tau/2)\bigr)\dd s\\
 &\qquad+\int_0^{\tau/2}
       \BM\bigl(y(s),y(s)-y(\tau/2)\bigr)\dd s.
\end{aligned}
\end{equation}
The fundamental theorem of calculus and \eqref{eq:convection-time} bound both
increments on the right by $C_R\tau$ in the required norms. Hence
\eqref{eq:basic-prod} and \eqref{eq:resolvent-contraction} give
\begin{equation}\label{eq:predict-local}
 \norm{\mathcal P_\tau(y(0))-y(\tau/2)}_{L^2}
 \le C_R\tau^2.
\end{equation}

Applying midpoint quadrature to
$y(\tau)-y(0)=\int_0^\tau y'(s)\dd s$ and using
$y'(\tau/2)=-\BM(y(\tau/2),y(\tau/2))$ gives
\begin{equation}\label{eq:midpoint-res}
 \norm{y(\tau)-y(0)
 +\tau\BM\bigl(y(\tau/2),y(\tau/2)\bigr)}_{L^2}
 \le\frac{\tau^3}{24}
       \sup_{0\le s\le\tau}\norm{y'''(s)}_{L^2}
 \le C_R\tau^3.
\end{equation}
Taylor expansion about $\tau/2$ also gives the endpoint-average estimate
\begin{equation}\label{eq:average-res}
 \norm{\frac{y(\tau)+y(0)}2-y(\tau/2)}_{H^1}
 \le\frac{\tau^2}{8}
       \sup_{0\le s\le\tau}\norm{y''(s)}_{H^1}
 \le C_R\tau^2.
\end{equation}

In the corrector equation, decompose its endpoint average as
\[
 \frac{\mathcal C_\tau(y(0))+y(0)}2
 =\frac{\mathcal C_\tau(y(0))-y(\tau)}2
  +\frac{y(\tau)+y(0)}2.
\]
Adding and subtracting $\tau\BM(y(\tau/2),y(\tau/2))$ gives
\begin{equation}\label{eq:corrector-error-identity}
\begin{aligned}
 &\left[I+\tfrac\tau2\BM(\mathcal P_\tau(y(0)),\cdot)\right]
 \left[\mathcal C_\tau(y(0))-y(\tau)\right]\\
 &=-\left[y(\tau)-y(0)
      +\tau\BM\bigl(y(\tau/2),y(\tau/2)\bigr)\right]\\
 &\quad-\tau\BM\left(
       \mathcal P_\tau(y(0))-y(\tau/2),
       \frac{y(\tau)+y(0)}2
       \right)\\
 &\quad-\tau\BM\left(
       y(\tau/2),
       \frac{y(\tau)+y(0)}2-y(\tau/2)
       \right).
\end{aligned}
\end{equation}
Applying \eqref{eq:basic-prod} and \eqref{eq:resolvent-contraction} to
\eqref{eq:corrector-error-identity}, and then using
\eqref{eq:predict-local}, \eqref{eq:midpoint-res} and
\eqref{eq:average-res}, gives
\begin{equation}\label{eq:convection-map-defect}
\begin{aligned}
 \norm{\mathcal C_\tau(y(0))-y(\tau)}_{L^2}
 &\le C_R\Biggl(\tau^3
       +\tau\norm{\mathcal P_\tau(y(0))-y(\tau/2)}_{L^2}\\
 &\hspace{11mm}
       +\tau\norm{\frac{y(\tau)+y(0)}2-y(\tau/2)}_{H^1}\Biggr)
 \le C_R\tau^3.
\end{aligned}
\end{equation}
Combining \eqref{eq:pc-reduction} and
\eqref{eq:convection-map-defect} gives
\begin{equation}\label{eq:predictor-corrector-part}
 \norm{\mathcal M_\tau(q_M^n)-\mathcal S_\tau(q_M^n)}_{L^2}
 \le C_R\tau^3.
\end{equation}

\smallskip\noindent\emph{Step 2: Local error of the exact Strang splitting.}
Set $v=q_M^n$. Only in this step, write
$F(u)=-\BM(u,u)$, so that
\begin{equation}\label{eq:convection-vector-field-derivative}
 F'(u)\eta=-\BM(\eta,u)-\BM(u,\eta).
\end{equation}
For $0\le t\le\tau$, regard $\mathcal S_t(v)$ as a path and define its
defect with respect to the Galerkin Navier--Stokes equation by
\begin{equation}\label{eq:Strang-defect-definition}
 \mathcal R(t)
 =\frac{\mathrm d}{\mathrm dt}\mathcal S_t(v)
  -\mu A\mathcal S_t(v)-F(\mathcal S_t(v)).
\end{equation}
For fixed $t$, let
\begin{equation}\label{eq:time-dependent-convection-path}
 y_t(r)=\varphi^{\mathrm{conv}}\bigl(r,S(t/2)v\bigr),
 \qquad 0\le r\le t.
\end{equation}
Then $\mathcal S_t(v)=S(t/2)y_t(t)$. Differentiation with respect to
$t$ gives
\begin{equation}\label{eq:Strang-path-derivative}
\begin{aligned}
 \frac{\mathrm d}{\mathrm dt}\mathcal S_t(v)
 ={}&\frac{\mu}{2}A\mathcal S_t(v)+S(t/2)F(y_t(t))\\
 &+\frac12S(t/2)
 \mathrm D\varphi^{\mathrm{conv}}\bigl(t,S(t/2)v\bigr)
 \mu AS(t/2)v.
\end{aligned}
\end{equation}
Since $A$ commutes with $S(t/2)$, the defect therefore has the form
\begin{equation}\label{eq:raw-Strang-defect}
\begin{aligned}
 \mathcal R(t)
 ={}&S(t/2)F(y_t(t))-F(\mathcal S_t(v))\\
 &+\frac12S(t/2)\Bigl[
 \mathrm D\varphi^{\mathrm{conv}}\bigl(t,S(t/2)v\bigr)
 \mu AS(t/2)v-\mu Ay_t(t)\Bigr].
\end{aligned}
\end{equation}

For vector fields $G$ and $H$ on $X_M$, write
\begin{equation}\label{eq:Lie-bracket-definition}
 [G,H](u)=G'(u)H(u)-H'(u)G(u),
\end{equation}
and set
\begin{equation}\label{eq:first-commutator}
 \Gamma(u)=[\mu A,F](u)
 =\mu\bigl(AF(u)-F'(u)Au\bigr).
\end{equation}
The commutator representations in \cite[Lemma~5]{AHKT}, applied to
$\mu A$ and $F$ in $X_M$, give
\begin{equation}\label{eq:first-commutator-identities}
\begin{aligned}
 S(t/2)F(y_t(t))-F(S(t/2)y_t(t))
 &=\int_0^{t/2}S(t/2-r)\Gamma(S(r)y_t(t))\dd r,\\
 \mathrm D\varphi^{\mathrm{conv}}\bigl(t,S(t/2)v\bigr)
 \mu AS(t/2)v-\mu Ay_t(t)
 &=-\int_0^t
 \mathrm D\varphi^{\mathrm{conv}}(t-r,y_t(r))
 \Gamma(y_t(r))\dd r.
\end{aligned}
\end{equation}
Substitution into \eqref{eq:raw-Strang-defect} yields
\begin{equation}\label{eq:Strang-defect-first-representation}
\begin{aligned}
 \mathcal R(t)
 ={}&\int_0^{t/2}S(t/2-r)\Gamma(S(r)y_t(t))\dd r\\
 &-\frac12S(t/2)\int_0^t
 \mathrm D\varphi^{\mathrm{conv}}(t-r,y_t(r))
 \Gamma(y_t(r))\dd r.
\end{aligned}
\end{equation}
Direct differentiation, using $y_t'=F(y_t)$ and the variational equation for
$\varphi^{\mathrm{conv}}$, gives
\begin{equation}\label{eq:transported-commutator-identities}
\begin{aligned}
 \frac{\mathrm d}{\mathrm dr}
 \bigl[S(t/2-r)\Gamma(S(r)y_t(t))\bigr]
 &=-S(t/2-r)[\mu A,\Gamma](S(r)y_t(t)),\\
 \frac{\mathrm d}{\mathrm dr}
 \bigl[\mathrm D\varphi^{\mathrm{conv}}(t-r,y_t(r))\Gamma(y_t(r))\bigr]
 &=-\mathrm D\varphi^{\mathrm{conv}}(t-r,y_t(r))
 [F,\Gamma](y_t(r)).
\end{aligned}
\end{equation}
Integration by parts in the two integrals in
\eqref{eq:Strang-defect-first-representation} produces the boundary terms
$\tfrac t2S(t/2)\Gamma(y_t(t))$ and
$-\tfrac t2S(t/2)\Gamma(y_t(t))$, which cancel. The remaining terms give
\begin{align}\label{eq:defect-final}
 \mathcal R(t)
 ={}&-\int_0^{t/2}(t/2-r)S(t/2-r)
 [\mu A,\Gamma](S(r)y_t(t))\dd r\nonumber\\
 &-\frac12S(t/2)\int_0^t r\,
 \mathrm D\varphi^{\mathrm{conv}}(t-r,y_t(r))
 [F,\Gamma](y_t(r))\dd r.
\end{align}

Using the $H^4$ reference bounds in Lemma~\ref{lem:one-step-H4}, the product
rule and commutation of $\Pi_M$ with derivatives give
\begin{equation}\label{eq:commutator-identities}
 \Gamma(u)=-2\mu\sum_j\BM(\partial_ju,\partial_ju),
 \qquad
 [\mu A,\Gamma](u)
 =-4\mu^2\sum_{i,j}\BM(\partial_{ij}u,\partial_{ij}u).
\end{equation}
For $d\le3$ and $\norm u_{H^4}\le C_R$, the embedding
$H^2\hookrightarrow L^\infty$ gives
\begin{equation}\label{eq:second-commutator-bound}
 \norm{[\mu A,\Gamma](u)}_{L^2}
 \le C\mu^2\norm{\nabla^2u}_{L^\infty}
              \norm{\nabla^3u}_{L^2}
 \le C_R\mu^2.
\end{equation}
Moreover,
\begin{equation}\label{eq:first-commutator-derivative}
 \Gamma'(u)\eta=-2\mu\sum_j
 \bigl[\BM(\partial_j\eta,\partial_ju)
       +\BM(\partial_ju,\partial_j\eta)\bigr].
\end{equation}
The product estimates imply
\begin{equation}\label{eq:vector-field-commutator-bounds}
 \norm{F(u)}_{H^3}\le C_R,\qquad
 \norm{\Gamma(u)}_{H^2}\le C_R\mu,
\end{equation}
and hence
\begin{equation}\label{eq:mixed-commutator-terms}
 \norm{F'(u)\Gamma(u)}_{L^2}
 +\norm{\Gamma'(u)F(u)}_{L^2}
 \le C_R\mu.
\end{equation}
By the definition of the Lie bracket,
\begin{equation}\label{eq:nonlinear-commutator-bound}
 \norm{[F,\Gamma](u)}_{L^2}\le C_R\mu.
\end{equation}

The reference path is bounded in $H^3$ by
Lemma~\ref{lem:one-step-H4}. Its variational equation and
skew-symmetry give
\begin{equation}\label{eq:tangent}
 \norm{\mathrm D\varphi^{\mathrm{conv}}
       (\ell,y_t(r))\eta}_{L^2}
 \le e^{C_R\ell}\norm{\eta}_{L^2},
 \qquad 0\le r\le r+\ell\le t.
\end{equation}
All flow times in \eqref{eq:defect-final} are nonnegative. The preceding
commutator bounds, \eqref{eq:defect-final}, and \eqref{eq:tangent} imply
\begin{equation}\label{eq:strong-defect}
 \norm{\mathcal R(t)}_{L^2}\le
 C_R(\mu+\mu^2)t^2,
 \qquad \mu>0.
\end{equation}

Finally, set
\begin{equation}\label{eq:Strang-error-definition}
 e(t)=\mathcal S_t(v)-\varphi^{\mathrm{NS}}(t,v).
\end{equation}
Then
\begin{equation}\label{eq:Strang-error-equation}
\begin{aligned}
 e'={}&\mu Ae
 +F(\mathcal S_t(v))-F(\varphi^{\mathrm{NS}}(t,v))
 +\mathcal R(t),\\
 e(0)={}&0.
\end{aligned}
\end{equation}
Since
\begin{equation}\label{eq:Strang-nonlinear-difference}
\begin{aligned}
 F(\mathcal S_t(v))-F(\varphi^{\mathrm{NS}}(t,v))
 ={}&-\BM(\mathcal S_t(v),e)\\
 &-\BM(e,\varphi^{\mathrm{NS}}(t,v)),
\end{aligned}
\end{equation}
skew-symmetry, Lemma~\ref{lem:one-step-H4}, and
\eqref{eq:strong-defect} give
\begin{equation}\label{eq:Strang-error-energy}
 \frac12\frac{\mathrm d}{\mathrm dt}\norm e_{L^2}^2
 +\mu\norm{\nabla e}_{L^2}^2
 \le C_R\norm e_{L^2}^2
 +C_R(\mu+\mu^2)t^2\norm e_{L^2}.
\end{equation}
Applying Gronwall's inequality after the standard regularization
$y_\varepsilon(t)=(\norm{e(t)}_{L^2}^2+\varepsilon)^{1/2}$ and then letting
$\varepsilon\downarrow0$ gives
\begin{equation}\label{eq:Strang-error-Gronwall}
 \norm{e(t)}_{L^2}
 \le C_R(\mu+\mu^2)
       \int_0^t e^{C_R(t-s)}s^2\dd s
 \le C_R(\mu+\mu^2)t^3.
\end{equation}
Taking $t=\tau$, we obtain
\begin{equation}\label{eq:splitting-part}
 \norm{\mathcal S_\tau(v)-\varphi^{\mathrm{NS}}(\tau,v)}_{L^2}
 \le C_R(\mu+\mu^2)\tau^3,
 \qquad \mu>0.
\end{equation}

\medskip

\smallskip\noindent\emph{Step 3: Contribution of the projection residual.}
The first path below solves the Galerkin equation with $r_M=0$, while the
second solves \eqref{eq:projected-equation} with the same initial value.
Thus $d_M(s):=\varphi^{\mathrm{NS}}(s,q_M^n)-q_M(t_n+s)$ satisfies
\begin{equation}\label{eq:projection-path-error-equation}
 \begin{aligned}
 d_M'={}&\mu Ad_M
 -\BM(\varphi^{\mathrm{NS}}(s,q_M^n),d_M)
 -\BM(d_M,q_M(t_n+s))-r_M(t_n+s),\\
 d_M(0)={}&0.
 \end{aligned}
\end{equation}
Testing with $d_M$, the first convection term vanishes by skew-symmetry,
while the second is bounded by
$\norm{\nabla q_M(t_n+s)}_{L^\infty}\norm{d_M}_{L^2}^2
\le C_R\norm{d_M}_{L^2}^2$ in view of \eqref{eq:q-H4}. Hence
\begin{equation}\label{eq:projection-path-energy}
 \frac12\frac{\mathrm d}{\mathrm ds}\norm{d_M}_{L^2}^2
 +\mu\norm{\nabla d_M}_{L^2}^2
 \le C_R\norm{d_M}_{L^2}^2
 +\norm{r_M(t_n+s)}_{L^2}\norm{d_M}_{L^2}.
\end{equation}
The same regularized Gronwall argument, now applied to
$y(s)=\norm{d_M(s)}_{L^2}$ with $y(0)=0$, yields
\begin{equation}\label{eq:projection-path-Gronwall}
 \norm{d_M(s)}_{L^2}
 \le \int_0^s e^{C_R(s-r)}
 \norm{r_M(t_n+r)}_{L^2}\dd r,
 \qquad 0\le s\le\tau.
\end{equation}
Taking $s=\tau$ and using \eqref{eq:spatial-error} shows that accumulation
over one time interval supplies the factor $\tau$:
\begin{equation}\label{eq:projected-reference-part}
\begin{aligned}
 \norm{\varphi^{\mathrm{NS}}(\tau,q_M^n)-q_M^{n+1}}_{L^2}
 &\le e^{C_R\tau}\int_{t_n}^{t_{n+1}}\norm{r_M(t)}_{L^2}\dd t\\
 &\le C_R\tau M^{-3}.
\end{aligned}
\end{equation}

Combining \eqref{eq:predictor-corrector-part},
\eqref{eq:splitting-part} and \eqref{eq:projected-reference-part} in
\eqref{eq:full-defect-decomposition} gives
\[
 \norm{\delta_M^{n+1}}_{L^2}
 \le C_R\left[(1+\mu+\mu^2)\tau^3+\tau M^{-3}\right]
 \le C_{R,\mu}\bigl(\tau^3+\tau M^{-3}\bigr).
\]
Since $1+\mu+\mu^2$ remains bounded as $\mu\downarrow0$,
the constant $C_{R,\mu}$ has the property stated in
Lemma~\ref{prop:full-step-consistency}.
\end{proof}

\medskip

\begin{lemma}
\label{lem:fixed-viscosity-projection-defect}
For fixed $\mu>0$, set
\begin{equation}\label{eq:fixed-spatial-defect-definition}
 \delta_{x,M}^{n+1}
 =\varphi^{\mathrm{NS}}(\tau,q_M^n)-q_M^{n+1}.
\end{equation}
Under the assumptions of Lemma~\ref{prop:full-step-consistency},
\begin{equation}\label{eq:fixed-viscosity-projection-defect}
 \norm{\delta_{x,M}^{n+1}}_{H^{-1}}
 \le \widetilde C_{R,\mu}\tau M^{-4},
 \qquad
 \norm{\delta_{x,M}^{n+1}}_{L^2}^2
 \le \widetilde C_{R,\mu}\tau M^{-8}.
\end{equation}
\end{lemma}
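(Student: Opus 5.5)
We work with the error path $d_M(s)=\varphi^{\mathrm{NS}}(s,q_M^n)-q_M(t_n+s)$ from Step~3 of the proof of Lemma~\ref{prop:full-step-consistency}. It satisfies \eqref{eq:projection-path-error-equation} with $d_M(0)=0$, and $\delta_{x,M}^{n+1}=d_M(\tau)$. The point is to exploit the viscous term $\mu\norm{\nabla d_M}_{L^2}^2$, which was discarded in \eqref{eq:projection-path-energy}, together with the dual bound \eqref{eq:spatial-error-dual}. Because $\mu$ is fixed, we may use Young's inequality with inverse powers of $\mu$.

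\emph{$L^2$ bound.} We test \eqref{eq:projection-path-error-equation} with $d_M$ as before. The first convection term vanishes by \eqref{eq:skew}. The second is bounded by $C_R\norm{d_M}_{L^2}^2$ using \eqref{eq:q-H4} and $H^4\hookrightarrow W^{1,\infty}$. For the residual term, since $d_M\in X_M\subset\dot H^1$ has zero mean (both paths lie in $X_M$, and $\widehat{d_M}(0)=0$ since $k\cdot\widehat v(k)=0$ does not force this; if necessary we use the full $H^1$ norm), we estimate
\[
 |\ip{r_M}{d_M}|\le \norm{r_M}_{H^{-1}}\norm{d_M}_{H^1}
 \le \tfrac{\mu}{2}\norm{\nabla d_M}_{L^2}^2+\tfrac12\norm{d_M}_{L^2}^2+C\mu^{-1}\norm{r_M}_{H^{-1}}^2 .
\]
Here we use $\norm{d}_{H^1}^2=\norm d_{L^2}^2+\norm{\nabla d}_{L^2}^2$, and when $\mu$ is small we bound the constant by $\max(1,\mu^{-1})$. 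This gives
\[
 \frac{\mathrm d}{\mathrm ds}\norm{d_M}_{L^2}^2+\mu\norm{\nabla d_M}_{L^2}^2\le C_R\norm{d_M}_{L^2}^2+C_\mu\norm{r_M(t_n+s)}_{H^{-1}}^2 .
\]
Gronwall on $[0,\tau]$ with \eqref{eq:spatial-error-dual} yields
\[
 \norm{d_M(\tau)}_{L^2}^2+\mu\int_0^\tau\norm{\nabla d_M}_{L^2}^2\le \widetilde C_{R,\mu}\,\tau M^{-8},
\]
which is the second inequality in \eqref{eq:fixed-viscosity-projection-defect}.

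\emph{$H^{-1}$ bound.} Integrating the equation in time gives
\[
 d_M(\tau)=\int_0^\tau\Bigl[\mu A d_M-\BM(\varphi^{\mathrm{NS}},d_M)-\BM(d_M,q_M)-r_M\Bigr]\dd s .
\]
We bound each term in $H^{-1}$ by duality against $\phi\in\dot H^1$, using $\ip{\BM(a,b)}{\phi}=\ip{\BM(a,b)}{\Pi_M\phi}$.
\begin{itemize}
\item Since $\norm{\mu A d_M}_{H^{-1}}\le\mu\norm{\nabla d_M}_{L^2}$, Cauchy--Schwarz and the dissipation integral give
\[
 \mu\int_0^\tau\norm{\nabla d_M}_{L^2}\dd s\le \mu\,\tau^{1/2}\Bigl(\int_0^\tau\norm{\nabla d_M}_{L^2}^2\dd s\Bigr)^{1/2}\le C_\mu\,\tau M^{-4}.
\]
\item For the first convection term, skew-symmetry gives $|\ip{\BM(\varphi^{\mathrm{NS}},d_M)}{\phi}|\le\norm{\varphi^{\mathrm{NS}}}_{L^\infty}\norm{d_M}_{L^2}\norm{\phi}_{H^1}$.
\item For the second, $|\ip{\BM(d_M,q_M)}{\phi}|\le\norm{d_M}_{L^2}\norm{\nabla q_M}_{L^\infty}\norm{\phi}_{L^2}$.
\end{itemize}
By Lemma~\ref{lem:one-step-H4}, both convection terms contribute at most $C_R\int_0^\tau\norm{d_M}_{L^2}\le C\tau^{3/2}M^{-4}$. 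The residual contributes $\tau\sup\norm{r_M}_{H^{-1}}\le C_R\tau M^{-4}$. Together these give $\norm{\delta_{x,M}^{n+1}}_{H^{-1}}\le\widetilde C_{R,\mu}\tau M^{-4}$, using $\tau\le T$.

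\emph{Main obstacle.} The $H^{-1}$ bound requires the sharp scaling $\tau$ in front of $M^{-4}$, not merely $\tau^{1/2}$. The viscous term is the critical one: the dissipation integral itself is $O(\tau M^{-8})$, and only combining it with Cauchy--Schwarz in time recovers exactly $\tau M^{-4}$. One must therefore keep the $\tau$-dependence of the dissipation estimate precise, with Gronwall factors $e^{C_R\tau}$ bounded uniformly.
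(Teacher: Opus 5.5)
Your proof is correct. The $L^2$ part is the same as the paper's second line: test with $d_M$, pair $r_M$ with $d_M$ in $H^{-1}\times H^1$, absorb via the viscous term using $\mu^{-1}$, then apply Gronwall.

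The $H^{-1}$ part follows a genuinely different route. The paper runs a negative-norm energy estimate. It tests the error equation with $\psi_M=(-A)^{-1}d_M$, so the viscous term becomes $\mu\norm{d_M}_{L^2}^2$. Young's inequality then lets this absorb the convection contributions $C_R\norm{d_M}_{L^2}\norm{\nabla\psi_M}_{L^2}$, and the regularized Gronwall argument is applied to $y=\norm{\nabla\psi_M}_{L^2}$. That estimate closes in $y$ alone, independently of the $L^2$ bound. However, it requires $d_M$ to have zero mean, both to define $(-A)^{-1}$ and to use Poincar\'e; this holds because both the Galerkin flow and $q_M$ conserve the mean. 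Its Gronwall factor is of the form $e^{C_R\tau/\mu}$.

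You instead write $d_M(\tau)=\int_0^\tau d_M'\dd s$ and bound each term by duality, taking the $L^2$ bound and the time-integrated dissipation as input. Only the viscous term needs the dissipation integral, and Cauchy--Schwarz in time gives exactly $\tau M^{-4}$ there. The convection terms come out one half-order better, $O(\tau^{3/2}M^{-4})$.

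What your route buys:
\begin{itemize}
\item It avoids the inverse Stokes operator and the mean-zero question entirely. Your hesitant remark about $\widehat{d_M}(0)$ is therefore unnecessary: the mean is in fact zero, but you never use it.
\item It makes the $\mu$-dependence explicit and algebraic. For $\mu\le1$ your constant behaves like $C_R\bigl(1+(\tau/\mu)^{1/2}\bigr)$, rather than exponentially in $\tau/\mu$.
\end{itemize}
The price is that your $H^{-1}$ estimate is no longer self-contained and must come after the $L^2$/dissipation estimate. For the fixed-viscosity statement, both arguments suffice.
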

\begin{proof}
Let $d_M$ be the path introduced in Step~3 of the consistency proof. Its
mean is zero. Set $\psi_M=(-A)^{-1}d_M$ and
$y=\norm{\nabla\psi_M}_{L^2}\simeq\norm{d_M}_{H^{-1}}$. Testing the equation
for $d_M$ first with $\psi_M$ and then with $d_M$, and using
Lemma~\ref{lem:one-step-H4} and \eqref{eq:q-H4}, gives
\begin{equation}\label{eq:fixed-projection-defect-energy}
\begin{aligned}
 \frac12\frac{\mathrm d}{\mathrm ds}y^2
 +\mu\norm{d_M}_{L^2}^2
 &\le C_R\norm{d_M}_{L^2}y
   +\norm{r_M(t_n+s)}_{H^{-1}}y,\\
 \frac12\frac{\mathrm d}{\mathrm ds}\norm{d_M}_{L^2}^2
 +\mu\norm{\nabla d_M}_{L^2}^2
 &\le C_R\norm{d_M}_{L^2}^2
   +\norm{r_M(t_n+s)}_{H^{-1}}\norm{d_M}_{H^1}.
\end{aligned}
\end{equation}
The Poincar\'e inequality, Young's inequality, and
\eqref{eq:spatial-error-dual} imply
\begin{equation}\label{eq:fixed-projection-defect-Young}
\begin{aligned}
 \frac12\frac{\mathrm d}{\mathrm ds}y^2
 +\frac\mu2\norm{d_M}_{L^2}^2
 &\le \widetilde C_{R,\mu}y^2+C_RM^{-4}y,\\
 \frac12\frac{\mathrm d}{\mathrm ds}\norm{d_M}_{L^2}^2
 +\frac\mu2\norm{\nabla d_M}_{L^2}^2
 &\le C_R\norm{d_M}_{L^2}^2
      +\widetilde C_{R,\mu}M^{-8}.
\end{aligned}
\end{equation}
Since $d_M(0)=0$, the same regularized Gronwall argument
applied to the first line, together with the usual Gronwall inequality for the
second, gives
\begin{equation}\label{eq:fixed-projection-defect-path-bounds}
 \norm{d_M(s)}_{H^{-1}}\le
 \widetilde C_{R,\mu}sM^{-4},
 \qquad
 \norm{d_M(s)}_{L^2}^2\le
 \widetilde C_{R,\mu}sM^{-8}.
\end{equation}
Taking $s=\tau$ proves \eqref{eq:fixed-viscosity-projection-defect}.
\end{proof}

\medskip

\subsection{Stability and convergence}
\label{sec:error-stability}

\begin{lemma}[Stability]\label{lem:stability}
Let $v_1,v_2\in X_M$ satisfy $\norm{v_2}_{H^4}\le R$ and
$\tau\le c_0/R$. Then
\begin{equation}\label{eq:fullstability}
 \norm{\mathcal M_\tau(v_1)-\mathcal M_\tau(v_2)}_{L^2}
 \le(1+C_R\tau)\norm{v_1-v_2}_{L^2}.
\end{equation}
More precisely,
\begin{equation}\label{eq:dissipative-fullstability}
\begin{aligned}
 &\norm{\mathcal M_\tau(v_1)-\mathcal M_\tau(v_2)}_{L^2}^2
 +\mu\tau\norm{\nabla\bigl(\mathcal M_\tau(v_1)
                  -\mathcal M_\tau(v_2)\bigr)}_{L^2}^2\\
 &\qquad\le(1+C_R\tau)\norm{v_1-v_2}_{L^2}^2.
\end{aligned}
\end{equation}
\end{lemma}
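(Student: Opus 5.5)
We follow the stages of $\mathcal M_\tau$ in \eqref{eq:numerical-one-step-map}. For $i=1,2$ we write $w_i^1=S(\tau/2)v_i$, $w_i^2=\mathcal P_\tau(w_i^1)$, $w_i^3=\mathcal C_\tau(w_i^1)$, and we denote differences by $e^k=w_1^k-w_2^k$ and $e^0=v_1-v_2$. The reference stages $w_2^k$ are bounded in $H^4$ by $C_R$; this follows from Lemma~\ref{lem:one-step-H4} applied with $z=v_2$, so in particular $\norm{\nabla w_2^k}_{L^\infty}\le C_R$. Every bound on $e^k$ will use only these reference bounds. We never need regularity of $v_1$, because the convective term is always arranged so that the advecting field carrying an error is paired with a derivative of the smooth reference state.

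\emph{Heat half-steps.} Testing $\partial_t e=\mu\Delta e$ with $e$ gives $\norm{S(\tau/2)e}_{L^2}^2+2\mu\int_0^{\tau/2}\norm{\nabla S(t)e}_{L^2}^2\dd t=\norm e_{L^2}^2$. The quantity $\norm{\nabla S(t)e}_{L^2}$ is nonincreasing in $t$. Hence for the final half-step we get
\[
\norm{e^{\mathrm{out}}}_{L^2}^2+\mu\tau\norm{\nabla e^{\mathrm{out}}}_{L^2}^2\le\norm{e^3}_{L^2}^2,
\qquad e^{\mathrm{out}}=\mathcal M_\tau(v_1)-\mathcal M_\tau(v_2).
\]
For the first half-step we only need $\norm{e^1}_{L^2}\le\norm{e^0}_{L^2}$. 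Consequently \eqref{eq:dissipative-fullstability} reduces to showing $\norm{e^3}_{L^2}^2\le(1+C_R\tau)\norm{e^1}_{L^2}^2$, and \eqref{eq:fullstability} then follows by taking square roots.

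\emph{Predictor.} Subtracting the two predictor equations and using bilinearity gives
\[
e^2+\tfrac\tau2\BM(w_1^1,e^2)=e^1-\tfrac\tau2\BM(e^1,w_2^2).
\]
By \eqref{eq:resolvent-contraction} and \eqref{eq:basic-prod} we obtain $\norm{e^2}_{L^2}\le(1+C_R\tau)\norm{e^1}_{L^2}$. The advecting field here is the rough $w_1^1$, and \eqref{eq:resolvent-contraction} holds for arbitrary $a\in X_M$, so this is harmless.

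\emph{Corrector.} This is the main step, because we need an $L^2$-bound of the form $1+C\tau$, not merely $C$. Set $m_i=(w_i^3+w_i^1)/2$ and $\bar e=(e^3+e^1)/2$. Subtracting the two corrector equations gives
\[
e^3-e^1+\tau\BM(w_1^2,\bar e)+\tau\BM(e^2,m_2)=0.
\]
We test with $\bar e$. The term $\ip{\BM(w_1^2,\bar e)}{\bar e}$ vanishes by \eqref{eq:skew}, which leaves
\[
\tfrac12\bigl(\norm{e^3}_{L^2}^2-\norm{e^1}_{L^2}^2\bigr)=-\tau\ip{\BM(e^2,m_2)}{\bar e}\le\tau\norm{e^2}_{L^2}\norm{\nabla m_2}_{L^\infty}\norm{\bar e}_{L^2}.
\]
Here $\norm{\nabla m_2}_{L^\infty}\le C_R$ by Lemma~\ref{lem:one-step-H4}. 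Inserting the predictor bound and $\norm{\bar e}_{L^2}\le(\norm{e^3}_{L^2}+\norm{e^1}_{L^2})/2$ yields
\[
\norm{e^3}_{L^2}^2-\norm{e^1}_{L^2}^2\le C_R\tau\norm{e^1}_{L^2}\bigl(\norm{e^3}_{L^2}+\norm{e^1}_{L^2}\bigr).
\]
Dividing by $\norm{e^3}_{L^2}+\norm{e^1}_{L^2}$ gives $\norm{e^3}_{L^2}\le(1+C_R\tau)\norm{e^1}_{L^2}$; the case where both vanish is trivial.

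\emph{Combining.} Chaining the final heat half-step, the corrector, the predictor, and the first heat half-step gives \eqref{eq:dissipative-fullstability}. We must check that the constants depend only on $R$ through Lemma~\ref{lem:one-step-H4}, and not on $\mu$ or $M$. This holds because no step uses the viscosity except the favorable heat identity, and $\tau\le c_0/R$ is exactly the hypothesis under which the reference stages are controlled.
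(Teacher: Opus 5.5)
Your proposal is correct and follows the paper's proof essentially step for step. It uses the same stage-difference equations, the resolvent contraction for the predictor, tests the corrector difference with $(e^3+e^1)/2$, cancels the factor $\norm{e^3}_{L^2}+\norm{e^1}_{L^2}$, and takes the reference-stage bounds from Lemma~\ref{lem:one-step-H4}. The only cosmetic difference is in the final heat half-step: you get $\norm{S(\tau/2)e}_{L^2}^2+\mu\tau\norm{\nabla S(\tau/2)e}_{L^2}^2\le\norm{e}_{L^2}^2$ from the energy identity and the monotonicity of $\norm{\nabla S(t)e}_{L^2}$, whereas the paper uses the Fourier-multiplier inequality $(1+x)e^{-x}\le1$, and the two are equivalent.
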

\begin{proof}
Set $\xi=S(\tau/2)v_1$ and $\zeta=S(\tau/2)v_2$. For
$\alpha\in\{\xi,\zeta\}$, let $w_\alpha^1=\alpha$ and denote the
corresponding predictor and corrector stages by $w_\alpha^2$ and
$w_\alpha^3$. With $e_j=w_\xi^j-w_\zeta^j$, subtraction of the stage
equations gives
\[
\begin{aligned}
 e_2+\tfrac\tau2\BM(\xi,e_2)
 &=e_1-\tfrac\tau2\BM(e_1,w_\zeta^2),\\
 e_3-e_1+\tau\BM(w_\xi^2,(e_3+e_1)/2)
 &=-\tau\BM(e_2,(w_\zeta^3+w_\zeta^1)/2).
\end{aligned}
\]
The resolvent contraction and \eqref{eq:basic-prod} yield
\[
 \norm{e_2}_{L^2}
 \le\left(1+\tfrac\tau2\norm{\nabla w_\zeta^2}_{L^\infty}\right)
       \norm{e_1}_{L^2}.
\]
Taking the $L^2$ inner product of the corrector equation with
$(e_3+e_1)/2$ and using \eqref{eq:skew} gives
\[
\begin{aligned}
 \frac12\bigl(\norm{e_3}_{L^2}^2-\norm{e_1}_{L^2}^2\bigr)
 \le{}&\frac\tau2
 \norm{\nabla((w_\zeta^3+w_\zeta^1)/2)}_{L^\infty}
 \norm{e_2}_{L^2}
 \bigl(\norm{e_3}_{L^2}+\norm{e_1}_{L^2}\bigr).
\end{aligned}
\]
Factoring the left-hand side and cancelling the common nonnegative factor,
with the zero case being immediate, we obtain
\[
\begin{aligned}
 \norm{e_3}_{L^2}
 \le\biggl[1+\tau\norm{\nabla((w_\zeta^3+w_\zeta^1)/2)}_{L^\infty}
 \left(1+\tfrac\tau2\norm{\nabla w_\zeta^2}_{L^\infty}\right)\biggr]
 \norm{e_1}_{L^2}.
\end{aligned}
\]
By \eqref{eq:heat-contraction}, Lemma~\ref{lem:one-step-H4}, and
$H^4\hookrightarrow W^{1,\infty}$,
\[
 \norm{e_1}_{L^2}\le\norm{v_1-v_2}_{L^2},
 \qquad
 \norm{\nabla w_\zeta^2}_{L^\infty}
 +\norm{\nabla((w_\zeta^3+w_\zeta^1)/2)}_{L^\infty}
 \le C_R.
\]
The step-size condition absorbs the quadratic term in $\tau$. The final heat
half-step is an $L^2$ contraction, which proves \eqref{eq:fullstability}.

For the stronger estimate, $(1+x)e^{-x}\le1$ for $x\ge0$ and the Fourier
representation of $S(\tau/2)$ give
\[
 \norm{S(\tau/2)v}_{L^2}^2
 +\mu\tau\norm{\nabla S(\tau/2)v}_{L^2}^2
 \le\norm v_{L^2}^2.
\]
Apply this inequality to
$v=\mathcal C_\tau(\xi)-\mathcal C_\tau(\zeta)$ and square the convection
stability estimate above. After reducing $c_0$ if necessary, its quadratic
factor in $\tau$ is bounded by $1+C_R\tau$, proving
\eqref{eq:dissipative-fullstability}.
\end{proof}

\medskip

\begin{proof}[Proof of Theorem~\ref{thm:full-error}]
Lemma~\ref{prop:full-step-consistency} gives the complete local defect
with respect to the Fourier projection $q_M(t)=\Pi_Mu(t)$. By
\eqref{eq:error-splitting}, the discrete error satisfies
\[
 \theta_M^{n+1}
 =\mathcal M_\tau(u_M^n)-\mathcal M_\tau(q_M^n)+\delta_M^{n+1}.
\]
Since \eqref{eq:q-H4} controls the reference stages uniformly at every time
level, taking $L^2$ norms and applying \eqref{eq:fullstability} and
\eqref{eq:full-error} give
\begin{equation}\label{eq:uniform-error-recursion}
 \norm{\theta_M^{n+1}}_{L^2}
 \le(1+C_R\tau)\norm{\theta_M^n}_{L^2}
      +C_{R,\mu}\bigl(\tau^3+\tau M^{-3}\bigr).
\end{equation}
Since $\theta_M^0=0$, discrete Gronwall, \eqref{eq:error-splitting}, and
\eqref{eq:projection-tails} yield
\begin{align}\label{eq:uniform-full-error-conclusion}
 \max_{0\le n\le N}\norm{\theta_M^n}_{L^2}
 &\le C_{T,R,\mu}\bigl(\tau^2+M^{-3}\bigr),\nonumber\\
 \max_{0\le n\le N}\norm{u_M^n-u(t_n)}_{L^2}
 &\le C_{T,R,\mu}\bigl(\tau^2+M^{-3}\bigr).
\end{align}
All step restrictions are covered by $\tau\le\tau_0(R):=c_0/R$. The constant
$C_{T,R,\mu}$ is independent of $M$ and $N$ and remains
bounded as $\mu\downarrow0$, because the viscosity enters the temporal
consistency bound only through nonnegative powers. The step-size threshold is
independent of $\mu$, $M$, and $N$. No global high-order bound for the
numerical or Galerkin solution is assumed.
For the fixed-viscosity estimate, set
\begin{equation}\label{eq:fixed-defect-decomposition}
\begin{aligned}
 \delta_{\tau,M}^{n+1}
 &=\mathcal M_\tau(q_M^n)
   -\varphi^{\mathrm{NS}}(\tau,q_M^n),\\
 \delta_{x,M}^{n+1}
 &=\varphi^{\mathrm{NS}}(\tau,q_M^n)-q_M^{n+1},\\
 a_M^{n+1}
 &=\mathcal M_\tau(u_M^n)-\mathcal M_\tau(q_M^n).
\end{aligned}
\end{equation}
Steps~1 and 2 of the consistency proof,
Lemma~\ref{lem:fixed-viscosity-projection-defect}, and
\eqref{eq:dissipative-fullstability} give, respectively,
\begin{align}
 \norm{\delta_{\tau,M}^{n+1}}_{L^2}
 &\le C_{R,\mu}\tau^3,\nonumber\\
 \norm{\delta_{x,M}^{n+1}}_{H^{-1}}
 &\le \widetilde C_{R,\mu}\tau M^{-4},
 &
 \norm{\delta_{x,M}^{n+1}}_{L^2}^2
 &\le \widetilde C_{R,\mu}\tau M^{-8},\nonumber\\
 \norm{a_M^{n+1}}_{L^2}^2
 +\mu\tau\norm{\nabla a_M^{n+1}}_{L^2}^2
 &\le(1+C_R\tau)\norm{\theta_M^n}_{L^2}^2.
 \label{eq:fixed-viscosity-input-bounds}
\end{align}
The error identity
$\theta_M^{n+1}=a_M^{n+1}+\delta_{\tau,M}^{n+1}
+\delta_{x,M}^{n+1}$ and Young's inequality imply
\begin{equation}\label{eq:fixed-defect-pairings}
\begin{aligned}
 2|\ip{a_M^{n+1}}{\delta_{\tau,M}^{n+1}}|
 &\le\tau\norm{a_M^{n+1}}_{L^2}^2
      +C_{R,\mu}\tau^5,\\
 2|\ip{a_M^{n+1}}{\delta_{x,M}^{n+1}}|
 &\le\tau\norm{a_M^{n+1}}_{L^2}^2
 +\frac{\mu\tau}{2}\norm{\nabla a_M^{n+1}}_{L^2}^2
 +\widetilde C_{R,\mu}\tau M^{-8}.
\end{aligned}
\end{equation}
Expanding the square, using \eqref{eq:fixed-viscosity-input-bounds}, and
choosing $c_0\le1$ in $\tau_0(R)=c_0/R$, we obtain
\begin{equation}\label{eq:fixed-viscosity-error-recursion}
 \norm{\theta_M^{n+1}}_{L^2}^2
 \le(1+C_R\tau)\norm{\theta_M^n}_{L^2}^2
     +\widetilde C_{R,\mu}
       \bigl(\tau^5+\tau M^{-8}\bigr).
\end{equation}

Since $\theta_M^0=0$, discrete Gronwall, followed by taking square roots and
using \eqref{eq:projection-tails}, gives
\begin{align}\label{eq:fixed-viscosity-error-conclusion}
 \max_{0\le n\le N}\norm{\theta_M^n}_{L^2}^2
 &\le \widetilde C_{T,R,\mu}
       \bigl(\tau^4+M^{-8}\bigr),\nonumber\\
 \max_{0\le n\le N}\norm{u_M^n-u(t_n)}_{L^2}
 &\le \widetilde C_{T,R,\mu}
       \bigl(\tau^2+M^{-4}\bigr).
\end{align}
\end{proof}

\section{A finite element method}\label{sec:finite-element}

The time integrator in Subsection~\ref{sec:time-stepping} also admits the
following conforming finite element realization.

Let $X_h\subset H^1(\T^d)^d$ and $Q_h\subset L_0^2(\T^d)$ be a conforming,
inf--sup stable periodic velocity--pressure pair with $h$ denoting the finite element mesh size, and set
\begin{equation}\label{eq:discrete-solenoidal-space}
 V_h
 =\{v_h\in X_h:\ \ip{\diver v_h}{q_h}=0
                  \text{ for every }q_h\in Q_h\}.
\end{equation}
Let $P_h$ be the $L^2$-orthogonal projection onto
$V_h$. Define the discrete Stokes operator
$A_h:V_h\to V_h$ by
\begin{equation}\label{eq:Ah}
 \ip{A_hv_h}{z_h}=-\ip{\nabla v_h}{\nabla z_h},
 \qquad v_h,z_h\in V_h,
\end{equation}
and set $S_h(t)=e^{\mu tA_h}$.

For $a_h,b_h\in V_h$, define $B_h(a_h,b_h)\in V_h$ by
\begin{equation}\label{eq:Bh}
 \ip{B_h(a_h,b_h)}{z_h}
 =\frac12\ip{(a_h\cdot\nabla)b_h}{z_h}
  -\frac12\ip{(a_h\cdot\nabla)z_h}{b_h},
 \qquad z_h\in V_h.
\end{equation}
Thus $\ip{B_h(a_h,b_h)}{b_h}=0$ without requiring pointwise
divergence-free velocities.

Initialize $u_h^0=P_h u_0$. Given $u_h^n\in V_h$, define
\begin{subnumcases}{\label{eq:fe-scheme}}
 w_h^{n,1}=S_h(\tau/2)u_h^n,&\\
 w_h^{n,2}+\dfrac\tau2 B_h\bigl(w_h^{n,1},w_h^{n,2}\bigr)=w_h^{n,1},&\\
 w_h^{n,3}-w_h^{n,1}
 +\tau B_h\left(w_h^{n,2},
       \dfrac{w_h^{n,3}+w_h^{n,1}}{2}\right)=0,&\\
 u_h^{n+1}=S_h(\tau/2)w_h^{n,3}.&
\end{subnumcases}


\begin{lemma}[Discrete energy decay property]
\label{prop:fe-structure}
For every $\tau>0$ and $\mu>0$, the finite element step
\eqref{eq:fe-scheme} is uniquely solvable. Moreover,
\begin{align}\label{eq:fe-energy}
 E(u_h^{n+1})-E(u_h^n)
 ={}&-\mu\int_0^{\tau/2}
       \norm{\nabla S_h(t)u_h^n}_{L^2}^2\dd t\nonumber\\
 &-\mu\int_0^{\tau/2}
       \norm{\nabla S_h(t)w_h^{n,3}}_{L^2}^2\dd t
 \le0.
\end{align}
Consequently,
\begin{align}
 E(u_h^{n+1})\le E(u_h^n),\qquad n\ge0.
\end{align}
\end{lemma}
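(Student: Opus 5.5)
The argument follows the proof of Theorem~\ref{thm:energy}, with $X_M$, $\BM$ and $S(t)$ replaced by $V_h$, $B_h$ and $S_h(t)$. Only two structural facts are needed. First, $B_h$ is skew-symmetric in its last two arguments, $\ip{B_h(a_h,b_h)}{z_h}=-\ip{B_h(a_h,z_h)}{b_h}$ for all $a_h,b_h,z_h\in V_h$. This is immediate from \eqref{eq:Bh}, since the defining form changes sign when $b_h$ and $z_h$ are interchanged; in particular $\ip{B_h(a_h,b_h)}{b_h}=0$, and no pointwise divergence-free property is used. Second, $A_h$ is self-adjoint and nonpositive on the finite-dimensional space $V_h$ with respect to the $L^2$ inner product, by \eqref{eq:Ah}. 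Hence $S_h(t)=e^{\mu tA_h}$ is well defined on $V_h$ and maps $V_h$ into itself.

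\emph{Solvability.} For fixed $a_h\in V_h$ and $\theta\in\R$, skew-symmetry gives
\[
\norm{b_h+\theta B_h(a_h,b_h)}_{L^2}^2=\norm{b_h}_{L^2}^2+\theta^2\norm{B_h(a_h,b_h)}_{L^2}^2 .
\]
So the linear map $I+\theta B_h(a_h,\cdot)$ is injective on $V_h$, and therefore bijective because $V_h$ is finite dimensional. The predictor is this operator with $a_h=w_h^{n,1}$ and $\theta=\tau/2$. After moving the $w_h^{n,1}$ part of the average to the right-hand side, the corrector is the same kind of operator with $a_h=w_h^{n,2}$ and $\theta=\tau/2$. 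Both stages are therefore uniquely solvable for every $\tau>0$, and the two heat stages are explicit.

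\emph{Energy identity.} Test the corrector equation with $(w_h^{n,3}+w_h^{n,1})/2$. Skew-symmetry kills the convection term, and the remaining term gives
\[
\tfrac12\bigl(\norm{w_h^{n,3}}_{L^2}^2-\norm{w_h^{n,1}}_{L^2}^2\bigr)=0 .
\]
For the heat stages, let $y(t)=S_h(t)z_h$. Since $y'=\mu A_hy$ in $V_h$, \eqref{eq:Ah} gives
\[
\frac{\mathrm d}{\mathrm dt}E(y)=\mu\ip{A_hy}{y}=-\mu\norm{\nabla y}_{L^2}^2 .
\]
Integrate this over $[0,\tau/2]$ with $z_h=u_h^n$, and again with $z_h=w_h^{n,3}$. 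Adding the two results to the conservation identity $E(w_h^{n,3})=E(w_h^{n,1})$ yields \eqref{eq:fe-energy}. Both integrals are nonnegative, which gives the sign and the monotonicity $E(u_h^{n+1})\le E(u_h^n)$.

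There is no real obstacle. The only point to check is that the discrete Stokes operator produces exactly the $\nabla$-dissipation in \eqref{eq:fe-energy}, and this holds by the definition \eqref{eq:Ah}.
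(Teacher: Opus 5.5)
Your proposal is correct and follows the paper's proof essentially step for step. You derive the norm identity for $I+\theta B_h(a_h,\cdot)$ from the skew-symmetry of \eqref{eq:Bh} and use finite-dimensional invertibility, then test the corrector with $(w_h^{n,3}+w_h^{n,1})/2$ and integrate $\frac{\mathrm d}{\mathrm dt}E(S_h(t)z_h)=-\mu\norm{\nabla S_h(t)z_h}_{L^2}^2$ over the two heat half-steps, exactly as the paper does.
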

\begin{proof}
For $a_h,b_h\in V_h$ and $\theta\in\R$, \eqref{eq:Bh} gives
\[
 \norm{b_h+\theta B_h(a_h,b_h)}_{L^2}^2
 =\norm{b_h}_{L^2}^2+\theta^2\norm{B_h(a_h,b_h)}_{L^2}^2.
\]
Hence $I+\theta B_h(a_h,\cdot)$ is invertible on the finite-dimensional
space $V_h$, so both convection stages are uniquely solvable.
The heat stages are well defined by $S_h(t)$. 

Testing
the corrector with $(w_h^{n,3}+w_h^{n,1})/2$ gives
$E(w_h^{n,3})=E(w_h^{n,1})$, while
\eqref{eq:Ah} gives, for $q_h(t)=S_h(t)z_h$,
\[
 \frac{\mathrm d}{\mathrm dt}E(q_h(t))
 =\mu\ip{A_hq_h(t)}{q_h(t)}
 =-\mu\norm{\nabla q_h(t)}_{L^2}^2.
\]
Integrating over the two heat half-steps proves \eqref{eq:fe-energy}.
\end{proof}

\section{Numerical experiments}\label{sec:numerics}

We present numerical results for the convergence and energy behavior of the
proposed method. All computations are two-dimensional on $[0,2\pi]^2$. we use the initial data
\begin{align}\label{eq:numerical-h4-data}
 \psi_m(x_1,x_2)& =\sin^m(x_1/2)\sin^m(x_2/2) \quad u_0 =(\partial_2\psi_m,-\partial_1\psi_m),\quad m=4.6.
\end{align}
The resulting velocity satisfies $u_0\in H^{4+\varepsilon}(\mathbb T^2)$ for
$0\le\varepsilon<0.1$. The Fourier approximation uses the circular cutoff
$|k|\le M$ and a $(4M+1)$-point FFT grid used to evaluate the
projected quadratic convolution without aliasing. For the fully discrete solution $u_{\tau,M}^n$, we
measure temporal and spatial convergence by
\begin{subequations}\label{eq:numerical-self-convergence}
\begin{align}
 D_\tau^{(M)}
 &=\max_{0\le n\le T/\tau}
   \norm{u_{\tau,M}^n-u_{\tau/2,M}^{2n}}_{L^2},
 &p_\tau
 &=\log_2\frac{D_\tau^{(M)}}{D_{\tau/2}^{(M)}},\label{eq:temporal-self-convergence}\\
 D_M^{(\tau)}
 &=\max_{0\le n\le N}
   \norm{\iota_M^{2M}u_{\tau,M}^{n}-u_{\tau,2M}^{n}}_{L^2},
 &p_M
 &=\log_2\frac{D_M^{(\tau)}}{D_{2M}^{(\tau)}}.
 \label{eq:spatial-self-convergence}
\end{align}
\end{subequations}
Here $\iota_M^{2M}$ is the canonical embedding from $X_M$ into $X_{2M}$.

We first examine the temporal and spatial convergence for six viscosities ranging from $\mu=1$ to $10^{-8}$. Figure~\ref{fig:h4-convergence} shows second-order temporal convergence for each tested viscosity, with comparable errors over the full viscosity range, and fourth-order spatial convergence. These observations agree with the viscosity-uniform temporal estimate and the fixed-viscosity fourth-order spatial estimate in Theorem~\ref{thm:full-error}.
\begin{figure}[htbp]
\centering
\begin{subfigure}[t]{0.48\textwidth}
\centering
\includegraphics[width=\linewidth]{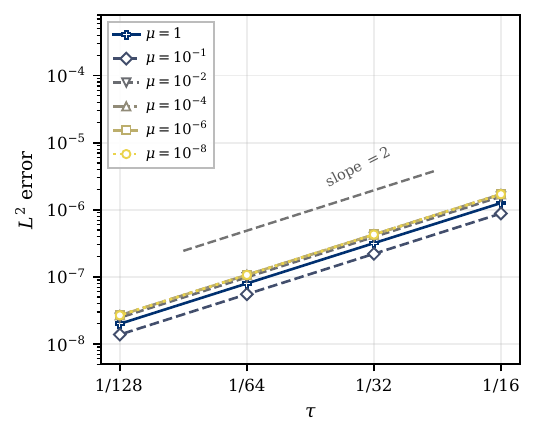}
\vspace{-20pt}
\caption{Temporal convergence}
\label{fig:h4-temporal-convergence}
\end{subfigure}
\begin{subfigure}[t]{0.48\textwidth}
\centering
\includegraphics[width=\linewidth]{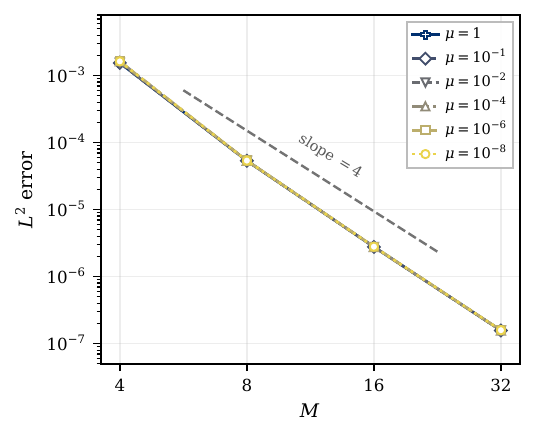}
\vspace{-20pt}
\caption{Spatial convergence}
\label{fig:h4-spatial-convergence}
\end{subfigure}
\vspace{-10pt}
\caption{Temporal and spatial convergence for Fourier-Galerkin method.}
\label{fig:h4-convergence}
\end{figure}

\FloatBarrier

 Figure~\ref{fig:energy-decay-mu} shows the discrete energy and the dissipation rate
\[\mathcal D^{n+\frac12}
=-\frac{E(U^{n+1})/E(U^0)-E(U^n)/E(U^0)}{\tau}
\]
for $M=32$, $T=0.5$, and $N=512$. The inset in panel~(a) resolves the relative energy change for $\mu=10^{-8},10^{-6},10^{-4}$. In all cases, the energy decreases monotonically and dissipation rate remains nonnegative, with stronger dissipation for larger $\mu$.
\begin{figure}[htbp]
\centering
\begin{subfigure}[t]{0.48\textwidth}
\centering
\includegraphics[width=\linewidth]{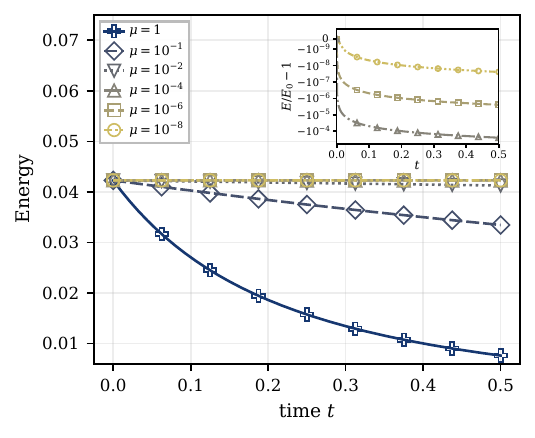}
\label{fig:energy-decay-normalized}
\end{subfigure}
\begin{subfigure}[t]{0.48\textwidth}
\centering
\includegraphics[width=\linewidth]{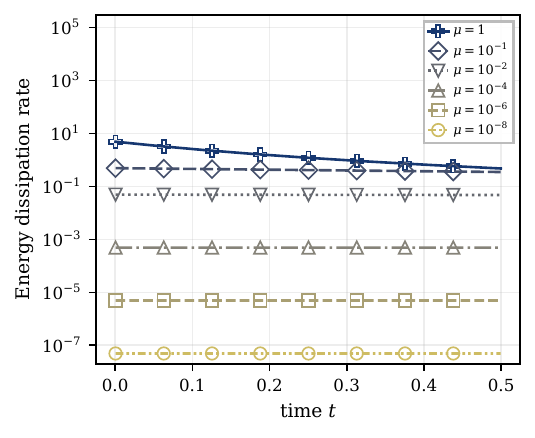}
\label{fig:energy-dissipation-rate}
\end{subfigure}
\vspace{-10pt}
\caption{Discrete energy dissipation over time.}
\label{fig:energy-decay-mu}
\end{figure}

\FloatBarrier
We denote the proposed method by LRI2 and compare it with LRI1 \cite{LMS}, ETD-RK2 \cite{CM}, CN-LE \cite{Ingram,IngramError}, and BDF2-LE \cite{AKR,GAN}, their structural properties are summarized in Table~\ref{tab:comparison-methods}. The regularity column lists
sufficient spatial conditions in the periodic setting.  All methods use the same Fourier space and initial data. 
For the temporal test, we evaluate the convergence in \eqref{eq:temporal-self-convergence} with $M=48$, $T=0.5$, and $N=16,32,64,128$. Figure~\ref{fig:accuracy-viscosity-h4} shows that LRI2 attains second-order convergence and gives the smallest $L^2$ error at every common step size for both $\mu=1$ and $10^{-6}$. Figure~\ref{fig:comparison-energy-h4} compares the energy decay properties of LRI2, LRI1, CN-LE, and BDF2-LE over three viscosity regimes. LRI1 exhibits additional numerical dissipation, whereas the kinetic energy of BDF2-LE need not decrease because its energy law applies to a modified energy.
\begin{table}[htbp]
\caption{Methods used in the comparison. }
\label{tab:comparison-methods}
\centering
\scriptsize
\setlength{\tabcolsep}{3pt}
\resizebox{\textwidth}{!}{%
\begin{tabular}{@{}lcc
p{0.15\textwidth}
p{0.24\textwidth}
p{0.16\textwidth}@{}}
\toprule [0.8pt]
Method & \shortstack{Time\\order} & \shortstack{Linear} & Diffusion
& \shortstack{Sufficient\\regularity} & Energy decay\\[2pt]
\midrule \\
\textbf{LRI2} & 2 & $\checkmark$ & Exact heat flow
& $u\in L^\infty(0,T;H^4)$ & \shortstack{$\checkmark$}  \\[8pt]
LRI1   & 1 & $\checkmark$  & Exact heat flow
& $u\in L^\infty(0,T;W^{2,d+\varepsilon})$ 
& $\checkmark$ \\[8pt]
ETD-RK2 & 2 &$\checkmark$  & Exact heat flow
& Not specified  & Not specified\\[8pt]
CN-LE   & 2 & $\checkmark$  & Crank--Nicolson
  & $u\in L^\infty(0,T;H^6)$ & $\checkmark$ \\[8pt]
BDF2-LE & 2 & $\checkmark$  & BDF2
  & $u\in L^\infty(0,T;H^6)$ 
  & Modified energy \\[4pt]
\bottomrule [0.8pt]
\end{tabular}%
}
\end{table}

\begin{figure}[htbp]
\centering
\begin{subfigure}[t]{0.48\textwidth}
\centering
\includegraphics[width=\linewidth]{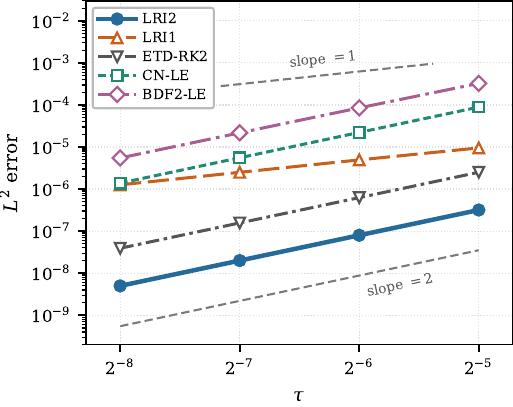}
\vspace{-12pt}
\caption{$\mu=1$}
\label{fig:accuracy-viscosity-h4-mu1}
\end{subfigure}\hfill
\begin{subfigure}[t]{0.48\textwidth}
\centering
\includegraphics[width=\linewidth]{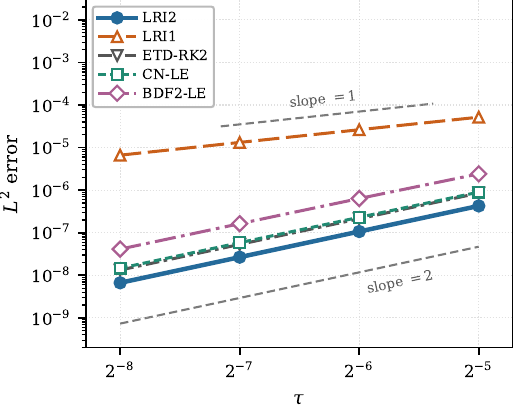}
\vspace{-12pt}
\caption{$\mu=10^{-6}$}
\label{fig:accuracy-viscosity-h4-mu1e-6}
\end{subfigure}
\vspace{-10pt}
\caption{Temporal convergence for methods used in the comparison}
\label{fig:accuracy-viscosity-h4}
\centering
\hspace{-12pt}
\begin{subfigure}[t]{0.35\textwidth}
\centering
\includegraphics[width=\linewidth]{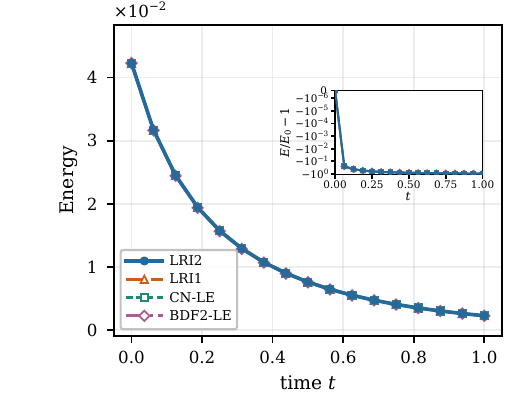}
\vspace{-14pt}
\caption{$\mu=1$}
\label{fig:comparison-energy-mu1}
\end{subfigure}
\hspace{-10pt}
\begin{subfigure}[t]{0.35\textwidth}
\centering
\includegraphics[width=\linewidth]{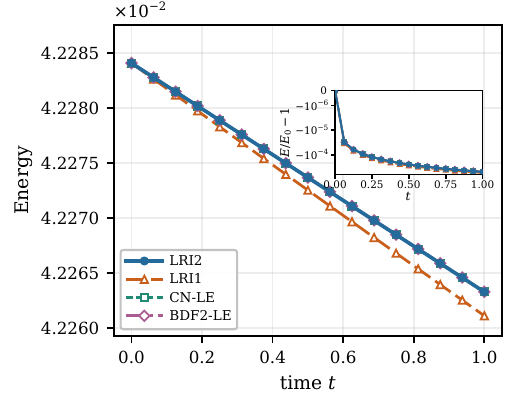}
\vspace{-14pt}
\caption{$\mu=10^{-4}$}
\label{fig:comparison-energy-mu1e-4}
\end{subfigure}
\hspace{-8pt}
\begin{subfigure}[t]{0.35\textwidth}
\centering
\includegraphics[width=\linewidth]{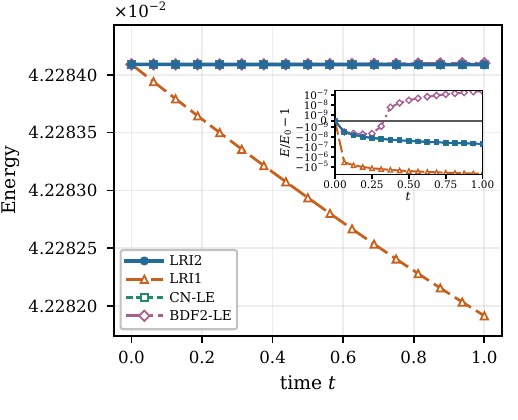}
\vspace{-14pt}
\caption{$\mu=10^{-8}$}
\label{fig:comparison-energy-mu1e-8}
\end{subfigure}
\hspace{-10pt}
\caption{Discrete energy for methods used in the comparison.}
\label{fig:comparison-energy-h4}
\end{figure}

\FloatBarrier

We also examine the finite element method from Section~\ref{sec:finite-element}
using the periodic $Q_2$--$Q_1$ Taylor--Hood finite element space. Although a 
error analysis for the finite element method
is not included, the numerical results in
Figure~\ref{fig:fe-convergence} exhibit second-order temporal convergence and
the optimal third-order spatial convergence in $L^2$. In addition,
Figure~\ref{fig:fe-energy-decay} illustrates the discrete energy-decay property
established in Section~\ref{sec:finite-element} over the tested range of
viscosities. These results support the applicability of the proposed time
integrator to conforming finite element discretizations.

\begin{figure}[htbp]
\centering
\begin{subfigure}[t]{0.48\textwidth}
\centering
\includegraphics[width=\linewidth]{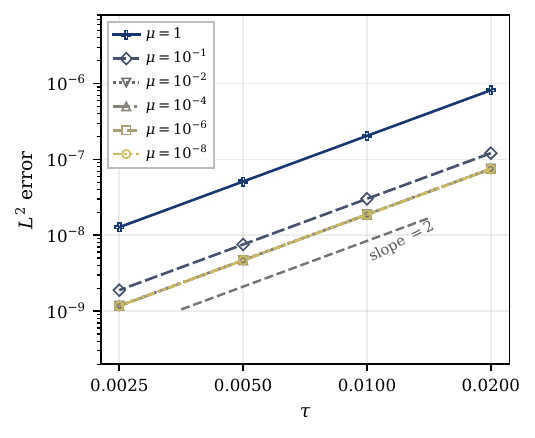}
\vspace{-20pt}
\caption{Temporal self-convergence}
\label{fig:fe-temporal-convergence}
\end{subfigure}
\begin{subfigure}[t]{0.48\textwidth}
\centering
\includegraphics[width=\linewidth]{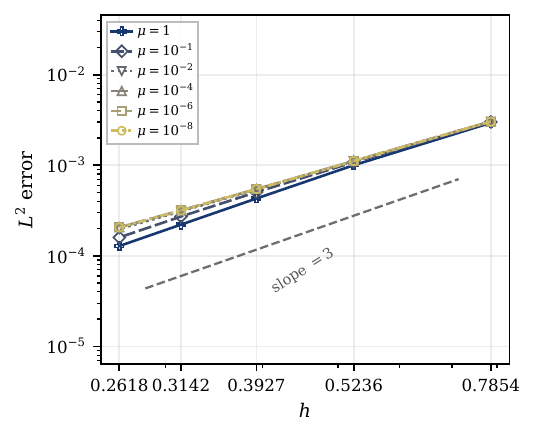}
\vspace{-20pt}
\caption{Spatial self-convergence}
\label{fig:fe-spatial-convergence}
\end{subfigure}
\vspace{-10pt}
\caption{Temporal and spatial convergence
for the finite element method. }
\label{fig:fe-convergence}
\centering
\begin{subfigure}[t]{0.48\textwidth}
\centering
\includegraphics[width=\linewidth]{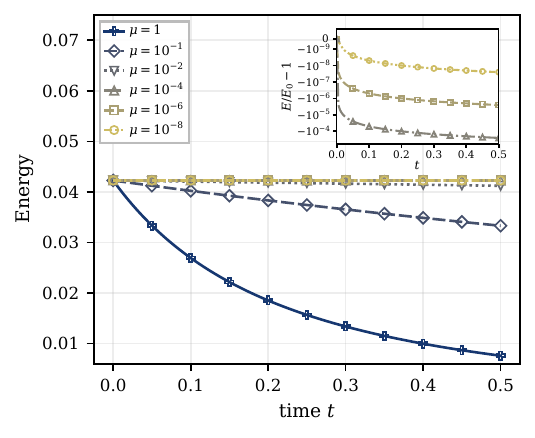}
\label{fig:fe-discrete-energy}
\end{subfigure}
\begin{subfigure}[t]{0.48\textwidth}
\centering
\includegraphics[width=\linewidth]{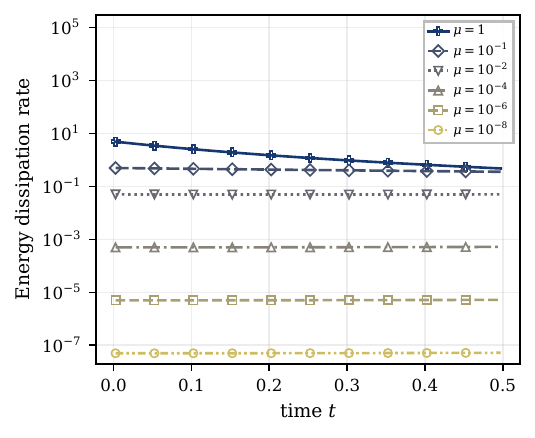}
\label{fig:fe-dissipation-rate}
\end{subfigure}
\vspace{-10pt}
\caption{Discrete energy dissipation over time for finite element method}
\label{fig:fe-energy-decay}
\end{figure}

\section{Conclusion}\label{sec:conclusions}
We have constructed a second-order exponential low-regularity integrator for
the incompressible Navier--Stokes equations by combining exact heat half-steps
with an energy-preserving predictor--corrector discretization of convection.
The method is uniquely solvable and satisfies an exact discrete kinetic-energy
law.  Assuming a uniform $H^4$ bound for the exact solution, the fully discrete
Fourier--Galerkin approximation converges in $L^2$ with error
 $O(\tau^2+M^{-3})$ for every $\mu>0$. The error constant
remains bounded as $\mu\downarrow0$, and no condition couples the time step to
the spectral cutoff.  For fixed $\mu>0$, the spatial convergence can be improved to the optimal order $O(M^{-4})$. The construction extends to conforming inf-sup
 stable finite element pairs through a skew-symmetric convection form. The
 numerical results confirm the second-order temporal accuracy and robustness
 with respect to the viscosity.

\bibliographystyle{siamplain}
\bibliography{references}
\end{document}